\newcounter{count}
\numberwithin{count}{section}
\newtheorem{Lemma}[count]{Lemma}
\newtheorem{Corollary}[count]{Corollary}
\newtheorem{Theorem}[count]{Theorem}
\begin{document}

\author[T. H.~Nguyen]{Thu Hien Nguyen}

\address{Department of Mathematics \& Computer Sciences, V. N. Karazin Kharkiv National University,
4 Svobody Sq., Kharkiv, 61022, Ukraine}
\email{nguyen.hisha@karazin.ua}

\author[A.~Vishnyakova]{Anna Vishnyakova}
\address{Department of Mathematics \& Computer Sciences, V. N. Karazin Kharkiv National University,
4 Svobody Sq., Kharkiv, 61022, Ukraine}
\email{anna.vishnyakova@karazin.ua}

\title[The entire functions of the Laguerre-P\'olya I class]
{On the entire functions from the Laguerre-P\'olya I class with non-monotonic second quotients of Taylor coefficients}

\begin{abstract}
We study the entire functions $f(z) = \sum_{k=0}^\infty a_k z^k, a_k>0,$  
with non-monotonic second quotients of Taylor coefficients, namely, such that
$\frac{a_{2m-1}^2}{a_{2m-2}a_{2m}} = a>1$ and 
$\frac{a_{2m}^2}{a_{2m-1}a_{2m+1}} = b>1$ for all $m \in \mathbb{N}.$
We obtain necessary and sufficient conditions under which  such  functions  
belong to the Laguerre-P\'olya I class.

\end{abstract}

\keywords {Laguerre-P\'olya class; entire functions of order zero; real-rooted 
polynomials; multiplier sequences; complex zero decreasing sequences}

\subjclass{30C15; 30D15; 30D35; 26C10}

\maketitle

\section{Introduction}
To begin with, we provide the definitions of the Laguerre--P\'olya class and Laguerre--P\'olya class of type I.

{\bf Definition 1}. {\it A real entire function $f$ is said to be in the {\it
Laguerre--P\'olya class}, written $f \in \mathcal{L-P}$, if it can
be expressed in the form
\begin{equation}
\label{lpc}
 f(z) = c z^n e^{-\alpha z^2+\beta z}\prod_{k=1}^\infty
\left(1-\frac {z}{x_k} \right)e^{zx_k^{-1}},
\end{equation}
where $c, \alpha, \beta, x_k \in  \mathbb{R}$, $x_k\ne 0$,  $\alpha \ge 0$,
$n$ is a nonnegative integer and $\sum_{k=1}^\infty x_k^{-2} < \infty$. }

{\bf Definition 2}. {\it  A real entire function $f$ is said to be in the {\it Laguerre--
P\'olya class of type I}, 
written $f \in \mathcal{L-P} I$, if it can
be expressed in the following form  
\begin{equation}  \label{lpc1}
 f(z) = c z^n e^{\beta z}\prod_{k=1}^\infty
\left(1+\frac {z}{x_k} \right),
\end{equation}
where $c \in  \mathbb{R},  \beta \geq 0, x_k >0 $, 
$n$ is a nonnegative integer,  and $\sum_{k=1}^\infty x_k^{-1} <
\infty$.   }

Note that the product on the right-hand sides in both definitions can be
finite or empty (in the latter case, the product equals 1).

These classes are essential in the theory of entire functions since it appears that the polynomials with only real zeros (or only real and nonpositive zeros) converge locally 
uniformly to these and only these functions. The following  prominent theorem provides an even stronger result. 

{\bf Theorem A} (E.~Laguerre and G.~P\'{o}lya, see, for example,
\cite[p. ~42--46]{HW}) and \cite[chapter VIII, \S 3]{lev}). {\it   

(i) Let $(P_n)_{n=1}^{\infty},\  P_n(0)=1, $ be a sequence
of real polynomials having only real zeros which  converges uniformly on the disc 
$|z|\leq A, A > 0.$ Then this sequence converges locally uniformly in $\mathbb{C}$ 
to an entire function 
from the $\mathcal{L-P}$ class.

(ii) For any $f \in \mathcal{L-P}$ there exists a sequence of real polynomials 
with only real zeros, which converges locally uniformly to $f$.

(iii) Let $(P_n)_{n=1}^{\infty},\  P_n(0)=1, $ be a sequence
of real polynomials having only real negative zeros which  
converges uniformly on the disc $|z| \leq A, A > 0.$ Then this 
sequence converges locally uniformly in $\mathbb{C}$ to an entire function
 from the class $\mathcal{L-P}I.$
 
(iv) For any $f \in \mathcal{L-P}I$ there is a
sequence of real polynomials with only real nonpositive 
zeros, which converges locally uniformly to $f$.}

Further, we define the second quotients of Taylor coefficients of $f.$
Let  $f(z) = \sum_{k=0}^\infty a_k z^k$  be an entire function with 
real nonzero coefficients, then 

\begin{align*}
q_n=q_n(f)&:=  \frac{a_{n-1}^2}{a_{n-2}a_n}, \quad n\geq 2.
\end{align*}

From this definition it follows straightforwardly that
\begin{align*}
& a_n = a_1\Big(\frac{a_1}{a_0} \Big)^{n-1} \frac{1}{q_2^{n-1}q_3^{n-2}
\cdot \ldots \cdot q_{n-1}^2 q_n}, \quad n\geq 2.
\end{align*}

In general, the problem of understanding whether a given entire function has only 
real zeros is not trivial. However, in 1926, J. I. Hutchinson found the following simple 
sufficient condition in terms of coefficients for an entire function with positive coefficients to 
have only real zeros.

{\bf Theorem B} (J. ~I. ~Hutchinson, \cite{hut}). { \it Let $f(x)=
\sum_{k=0}^\infty a_k x^k$, $a_k > 0$ for all $k$. 
Then $q_n(f)\geq 4$, for all $n\geq 2,$  
if and only if the following two conditions are fulfilled:\\
(i) The zeros of $f(x)$ are all real, simple and negative, and \\
(ii) the zeros of any polynomial $\sum_{k=m}^n a_k x^k$, $m < n,$  formed 
by taking any number 
of consecutive terms of $f(x) $, are all real and non-positive.}

For some extensions of Hutchinson's results see,
for example, \cite[\S4]{cc1}.

Next, we define the multiplier sequence.

{\bf Definition 3}.  A sequence $(\gamma_k)_{k=0}^\infty$ of real
numbers is called a multiplier sequence if, whenever the real
polynomial $P(x) = \sum_{k=0}^n a_k z^k $ has only real zeros, the
polynomial $\sum_{k=0}^n \gamma_k a_k z^k $ has only real zeros. 
The class of multiplier sequences is denoted by $\mathcal{MS}$.

The following theorem fully describes multiplier sequences.

{\bf Theorem C} (G. P\'olya and J.Schur, cf. \cite{polsch}, \cite[pp. 100-124]{pol}
and\cite[pp. 29-47]{O} ).  
{\it Let  $(\gamma_k)_{k=0}^\infty$ be a given 
real sequence. The following 
three statements are equivalent.}

{\it

1. $(\gamma_k)_{k=0}^\infty$ is a multiplier sequence.

2.  For every $n\in \mathbb{N}$ the
polynomial $P_n(z) =\sum_{k=0}^n {\binom{n}{k}} \gamma_k z^k $ has only real 
zeros of the same sign. 

3. The power series $ \Phi (z) := \sum_{k=0}^\infty \frac
{\gamma_k}{k!}z^k$ converges absolutely in the whole complex plane
and the entire function $\Phi(z)$ or the entire function
$\Phi(-z)$ admits the representation
\begin{equation}
\label{pred}
 C e^{\sigma z} z^m \prod_{k=1}^\infty
(1+\frac{z}{x_k}),
\end{equation}

where $C\in{\mathbb{R}}, \sigma \geq 0, m\in {\mathbb{N}}\cup \{0\}, 0<x_k
\leq \infty,\   \sum_{k=1}^\infty \frac{1}{x_k} < \infty.$}

Strikingly, the following fact is an obvious consequence.

{\bf Corollary of Theorem C.}   {\it The sequence 
$(\gamma_0, \gamma_1, \ldots, \gamma_l,$ $ 0, 0, \ldots )$ is a
multiplier sequence if and only if the polynomial $P(z)=
\sum_{k=0}^l \frac {\gamma_k}{k!}z^k$ has only real zeros of the
same sign.}

Further, let us introduce the notion of a complex zero decreasing sequence. For a real polynomial $P $  we denote by $Z_{\mathbb{C}}(P)$ the number
of nonreal zeros of $P$ counting multiplicities.

{\bf Definition 4}.  A sequence $(\gamma_k)_{k=0}^\infty$ of real
numbers is said to be a complex zero decreasing sequence (we write 
$(\gamma_k)_{k=0}^\infty \in \mathcal{CZDS}$), if
\begin{equation}
\label{czds}
 Z_{\mathbb{C}}\left(\sum_{k=0}^n \gamma_k a_k z^k\right) 
 \leq Z_{\mathbb{C}}\left(\sum_{k=0}^n a_k z^k\right) ,
\end{equation}
for any real polynomial $\sum_{k=0}^n a_k z^k .$ 

The existence of nontrivial 
$\mathcal{CZDS}$ sequences is a consequence of the following remarkable 
theorem proved by Laguerre and extended by P\'olya.

{\bf Theorem D}  ( E.~Laguerre, see
\cite[pp. 314-321]{pol}).   {\it Let $f$ be an entire function from the
 Laguerre-P\'olya class  having only negative zeros. Then
$\left(f(k)\right)_{k=0}^\infty \in \mathcal{CZDS}$.}

As it follows from the  theorem above,
\begin{equation}
\label{m1}
\left(a^{-k^2}\right)_{k=0}^\infty\in\mathcal{CZDS},\  a\geq 1,\quad
\left(\frac{1}{k!}\right)_{k=0}^\infty\in \mathcal{CZDS}.
\end{equation}

A special entire function $g_a(z) =\sum _{k=0}^{\infty} z^k a^{-k^2}$, $a>1,$
known as the \textit{partial theta function} (the classical Jacobi theta function is defined 
by the series $\theta(z) := \sum_{k = - \infty}^{\infty} z^k a^{-k^2}$),  was investigated 
by many mathematicians. Note that  $q_n(g_a)=a^2$ for all $n.$ 
The  survey \cite{War} by S.O.~Warnaar contains the history of
investigation of the partial theta-function and some of its main properties. 

Note that, since $\left(a^{-k^2}\right)_{k=0}^\infty\in\mathcal{CZDS}\ $  for  $a\geq 1,$
we conclude that for every $n\geq 2$ there exists a constant $c_n >1$ such that  
$S_{n}(z,g_a):=\sum _{k=0}^{n} z^k a^{- k^2} \in \mathcal{L-P}$
$ \ \Leftrightarrow \ a^2 \geq c_n$. The following theorem answers the question 
for which values of a parameter $a$ the partial theta-function and its Taylor 
sections belong to the $\mathcal{L-P}$ class.

{\bf Theorem E} (O. ~Katkova, T. ~Lobova, A. ~Vishnyakova, \cite{klv}).  
{\it There exists a constant 
$q_\infty $ $(q_\infty\approx 3{.}23363666 \ldots ) $ such that:
\begin{enumerate}
\item
$g_a(z) \in \mathcal{L-P} \Leftrightarrow \ a^2\geq q_\infty ;$
\item
$g_a(z) \in \mathcal{L-P} \Leftrightarrow \ $  there exists $z_0 \in (- a^3, -a)$ 
such that $ \  g_a(z_0) \leq 0$
\item
for a given $n\geq 2$ we have $S_{n}(z,g_a) \in \mathcal{L-P}$ $ \  \Leftrightarrow \ $
there exists $z_n \in (- a^3, -a)$ such that $ \ S_{n}(z_n,g_a) \leq 0;$
\item
$ 4 = c_2 > c_4 > c_6 > \cdots $  and     $\lim_{n\to\infty} c_{2n} = q_\infty ;$
\item
$ 3= c_3 < c_5 < c_7 < \cdots $  and     $\lim_{n\to\infty} c_{2n+1} = q_\infty .$
\end{enumerate}}

We would like to mention a series of works by V.P. ~Kostov dedicated to the interesting properties of zeros of 
the partial theta-function and its  derivative (see \cite{kos0, kos1, kos2, kos3, kos03, 
kos04, kos4, kos5, kos5.0}). Besides, a wonderful paper \cite{kosshap} among the other results explains the role 
of the constant $q_\infty $ in the study of the set of entire functions with positive coefficients having all Taylor truncations with only real zeros. See also \cite{sokal}
for the  properties of the leading root of the partial theta function.

Subsequently, we need the following Lemma from the work \cite{ngthv2}.

{\bf Lemma F} (see \cite[Lemma 2.1]{ngthv2} or \cite[Lemma 1.2]{ngthv3}).
If $f(z) = \sum_{k=0}^\infty a_kz^k, a_k>0,$ belongs to $\mathcal{L-P}I,$ 
then $q_3(q_2 - 4) + 3 \geq 0.$
In particular, if $q_3 \geq q_2,$ then $q_2 \geq 3.$

It appears that for many important entire functions with positive
coefficients $f(z)=\sum_{k=0}^\infty a_k z^k $ (for example, the partial theta function
from \cite{klv}, functions from \cite{BohVish} and \cite{Boh}, 
the $q$-Kummer function $\prescript{}{1}{\mathbf{\phi}}_1(q;-q; q,-z)$ 
and others) the following two conditions
are equivalent: 

(i) $f$ belongs to the Laguerre--P\'olya class of type I,  

and 

(ii) there exists $x_0 \in [-\frac{a_1}{a_2},0]$ such that $f(x_0) \leq 0.$

The following theorem is  a necessary condition for an entire function to belong to the Laguerre--Pol\'ya class of type I, in terms of the closest to zero roots. We will further use it in our proofs.

{\bf Theorem G}  (T.~H.~Nguyen, A.~Vishnyakova, \cite{ngthv3}).
{\it Let $f(z)=\sum_{k=0}^\infty a_k z^k $, $a_k > 0$ for all $k,$  be an 
entire function.  Suppose that the quotients $q_n(f)$ satisfy the following condition: 
$q_2(f) \leq q_3(f).$ If the function $f$ belongs to the  Laguerre--P\'olya class of type I, then 
there exists  $z_0 \in [-\frac{a_1}{a_2},0]$ such that $f(z_0) \leq 0$. }

Many important entire functions with monotonic second quotients of Taylor 
coefficients were previously studied. For instance, 
$f(z) = \sum_{k=0}^{\infty} \frac{z^k}{(k!)a^{k^2}}, a\geq 1,$ has $q_k(f) = \frac{k}{k-1} a^2 $ which 
are decreasing in $k.$ It is known that $f  \in \mathcal{L-P I}$ for all $a \geq 1.$ In 
\cite{klv1} it is proved that all Taylor sections of this function belong to the  
Laguerre--P\'olya class of type I if and only if $a^2\geq q_\infty. $
The following function is known as the second $q$-exponential  function $E_q(z)$ with $q=1/a:$
$$h_a(z) = 1 + \sum \limits_{k=1}^\infty \frac{z^k}{(a^k -1)(a^{k-1} - 1)\cdots (a-1)} = 
\prod \limits_{k=1}^\infty \left(1 + \frac{z}{a^k} \right),  a > 1.
$$
We have $q_k(h_a) = \frac{a^k -1}{a^{k-1}-1},$ and $q_k(h_a)$  are decreasing in $k.$ Note that this 
function has only real negative zeros, i.e. it belongs to the Laguerre--P\'olya I class.
Next, the function $\varphi_{m, a} (z) = \sum_{k=0}^{\infty} \frac{z^k}{a^{k^2}} (k!)^m, a > 1, 
m \geq 1 $ was studied by A.~Bohdanov and A.~Vishnyakova in \cite{BohVish}. 
Note that  $q_k(\varphi_{m, a})$ are increasing in $k.$ The necessary and sufficient conditions 
for this function to belong to the Laguerre--P\'olya class of type I were investigated.
The function $$y_a(z) = \sum_{k=0}^{\infty} \frac{z^k}{(a + 1)(a^2 + 1) \cdots (a^k +1)}, a > 1,$$ 
is also known as the $q$-Kummer function $\prescript{}{1}{\mathbf{\phi}}_1(q;-q; q,-z).$ 
Note that its  $q_k$ are increasing in $k.$ The question of its belonging to the Laguerre--P\'olya 
class of type I was investigated by T.H.~Nguyen in \cite{ngth1}.

In \cite{ngthv1}, we have proved that if an entire function with positive coefficients
has a decreasing sequence of second quotients of Taylor coefficients and
the limit of this sequence is greater than or equal to $q_\infty,$ then this 
function belongs to the Laguerre--P\'olya 
class of type I. In \cite{ngthv2}, \cite{ngthv4} and \cite{ngthv5}, we have found the conditions for entire 
functions  with increasing second quotients of Taylor coefficients to belong to the 
Laguerre--P\'olya class of type I.

In this paper, we study the entire functions with non-monotonic second 
quotients and find the conditions for them to belong to the Laguerre-P\'olya class of type I.
We present our first attempt at investigating entire functions with non-monotonic 
second quotients and consider the following function:

$$f(x) = \sum_{k=0}^\infty \frac{x^k}{q_2^{k-1}q_3^{k-2} \cdots q_{k-1}^2q_k},$$
where 
$$q_2 = q_4 = q_6 = \ldots = a > 1,$$
$$q_3 = q_5 = q_7 = \ldots = b > 1,$$
or
\begin{align*}
f(x) = 1 + x + \frac{x^2}{a} + \frac{x^3}{a^2b} + \frac{x^4}{a^3b^2a} + 
\frac{x^5}{a^4b^3a^2b} + \ldots
\end{align*}

We look into the case when $a \neq b.$ Note that when $a = b,$ we obtain the case 
of the partial-theta function. We find the new conditions for which $(a, b)$ the entire function 
$f$ belongs to the Laguerre-P\'olya I class.

By Lemma F, if $f\in \mathcal{L-P I},$ $q_2(f) = a, q_3(f) = b$ and $a < b,$ then $a \geq 3.$

Therefore, we look for $(a, b)$ such that $a < b$ and $a \geq 3.$ The case $4 \leq a < b$ 
was studied by Hutchinson (see Theorem B), so we find new conditions for $a \in [3, 4).$

We present our main result.

\begin{Theorem}
\label{th:mthm1}
Let $f(z)=\sum_{k=0}^\infty a_k z^k $, $a_k > 0$ for all $k,$  be an 
entire function. Suppose that the quotients $q_n(f)$ satisfy the following 
condition: $q_2(f) = q_4(f) = q_6(f) = \ldots = a,$ and 
$q_3(f) = q_5(f) = q_7(f) = \ldots = b,$ $1 < a < b$. Then the function 
$f$ belongs to the Laguerre--P\'olya I  class  if and only if there exists 
$z_0 \in [-\frac{a_1}{a_2},0]$ such that $f(z_0) \leq 0$.
\end{Theorem}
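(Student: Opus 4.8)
\textbf{Necessity} is immediate: since $q_2(f)=a<b=q_3(f)$, the hypothesis $q_2(f)\le q_3(f)$ of Theorem G holds, so $f\in\mathcal{L-P}I$ already forces the existence of $z_0\in[-a_1/a_2,0]$ with $f(z_0)\le 0$. All the content is therefore in the sufficiency. For this direction I would first normalize: replacing $f(z)$ by $c\,f(\lambda z)$ for suitable $c,\lambda>0$ leaves the class $\mathcal{L-P}I$, every quotient $q_n$, and the rescaled interval invariant, so I may assume $a_0=a_1=1$. Solving the recurrence $q_{2m}=a$, $q_{2m+1}=b$ then gives the closed forms $a_{2m}=(ab)^{-m^2}b^{m}$ and $a_{2m+1}=(ab)^{-m^2}a^{-m}$; in particular $a_2=1/a$, so the interval in question is $[-a,0]$.

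\textbf{A structural observation.} Splitting $f$ into even and odd parts yields $f(x)=A(x^2)+xB(x^2)$, where
\[
A(y)=\sum_{m\ge0}(ab)^{-m^2}(by)^{m},\qquad B(y)=\sum_{m\ge0}(ab)^{-m^2}(y/a)^{m}.
\]
Both $A$ and $B$ are, up to the scalings $y\mapsto by$ and $y\mapsto y/a$, the \emph{same} partial theta function $P(w)=\sum_{m\ge0}(ab)^{-m^2}w^{m}$, whose second quotients are all equal to $(ab)^2$. In the range of interest one has $(ab)^2\ge q_\infty$: indeed a direct estimate (of the type $1+x+x^2/a>0$ on $[-a,0]$ for $a<4$, corrected by the rapidly decaying higher terms) shows that the sign hypothesis cannot hold for small $a$, so one may first reduce to $a\ge 3$, whence $ab\ge 9$ and $(ab)^2>q_\infty$. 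Theorem E then gives $P\in\mathcal{L-P}$, so $A$ and $B$ have only simple negative zeros, located at $-r_k/b$ and $-ar_k$ respectively, where $-r_k$ (with $0<r_1<r_2<\cdots$) are the zeros of $P$. This is the device that lets the partial-theta theory enter the problem and that I would use to control the sign and oscillation of $f$ on $(-\infty,0]$.

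\textbf{Main line for sufficiency.} I would prove membership through approximation by Taylor sections: by Theorem A(iii) it suffices to show that all even-degree sections $S_n(x)=\sum_{k=0}^n a_k x^k$, which satisfy $S_n(0)=1$ and converge locally uniformly to $f$, have only real and hence negative zeros. The sign hypothesis $f(z_0)\le0$ for some $z_0\in[-a,0]$ is the input I would feed in to force the required number of sign changes: using the super-geometric decay $a_{2m},a_{2m+1}=O((ab)^{-m^2})$ of the coefficients, the partial-theta control of $A$ and $B$ above, and the location of $z_0$ near the origin, I would show inductively that each $S_n$ changes sign $\deg S_n$ times along a suitably chosen geometric mesh in $(-\infty,0)$, which pins down all its zeros as real and negative. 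The necessity direction (Theorem G) guarantees that the sign hypothesis is exactly the borderline information needed, so that no slack is lost in this translation.

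\textbf{The main obstacle.} The decisive difficulty is proving real-rootedness of the sections in the range $a\in[3,4)$. For $a\ge4$ every quotient satisfies $q_n\ge4$ and Hutchinson's Theorem B gives real negative zeros at once; but when $a<4$ the even quotients $q_{2m}=a$ violate Hutchinson's bound for infinitely many indices, so real-rootedness cannot be read off coefficient-wise. Moreover, because $(q_n)$ is non-monotonic, the monotone-quotient techniques of the authors' earlier papers do not apply, and one must instead confront the genuine interaction between the small even quotients ($=a$) and the large odd quotients ($=b$). Concretely, the hard step is to show that a \emph{single} nonpositive value of $f$ on $[-a,0]$ propagates to full real-rootedness of every section; I expect this to require delicate estimates on the closest-to-origin zeros $r_1,r_2$ of the partial theta $P$ and on the competition between consecutive terms of $f$ near $x=-a$, carried out in the spirit of the proof of Theorem E.
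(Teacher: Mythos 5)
Your necessity direction is fine and matches the paper: Theorem G applies verbatim because $q_2=a<b=q_3$. The normalization $a_0=a_1=1$ and the closed forms $a_{2m}=(ab)^{-m^2}b^m$, $a_{2m+1}=(ab)^{-m^2}a^{-m}$ are correct, and the even/odd splitting $f(x)=A(x^2)+xB(x^2)$ with $A,B$ rescalings of the partial theta function $g_{ab}$ is a genuine and correct structural observation (indeed $q_n(g_{ab})=(ab)^2\geq q_\infty$ in the relevant range). But the sufficiency direction --- which is the entire content of the theorem --- is not proved. Knowing that $A$ and $B$ separately lie in $\mathcal{L-P}$ does not imply anything about the reality of the zeros of $A(x^2)+xB(x^2)$ without an interlacing (Hermite--Biehler type) relation between the zeros of $A$ and $B$, which you neither state nor establish; and your ``main line'' reduces to the assertion that each section changes sign $\deg S_n$ times along ``a suitably chosen geometric mesh,'' which is precisely the statement to be proved. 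You then concede in your last paragraph that the decisive step (propagating one nonpositive value on $[-a,0]$ to full real-rootedness) is expected to ``require delicate estimates'' that you do not supply. So the proposal identifies the difficulty correctly but does not close it.

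For comparison, the paper's proof works with $\varphi(x)=f(-x)$ directly (not with sections) and supplies two quantitative ingredients that are absent from your sketch. First, a Rouch\'e-type lemma: on the circles $|x|=\rho_j$ with $\rho_j=q_2q_3\cdots q_j\sqrt{q_{j+1}}$ and $j$ even, the five-term block $g_j$ of the series dominates the rest, $\min_\theta|g_j(\rho_je^{i\theta})|>\max_\theta|\varphi-g_j|$, and since the normalized block is a self-reciprocal quartic in $w=x/\rho_j$ with no zeros on $|w|=1$, it has exactly two zeros inside, so $\varphi$ has exactly $j$ zeros in $|x|<\rho_j$. Second, sign evaluations: $\varphi(\rho_j)\geq 0$ for even $j$ and $\varphi(r_j)\leq 0$ for odd $j$, where $r_j=q_2\cdots q_j z_0$, proved by leading-term comparisons that use $\varphi(z_0)\leq 0$. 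Interlacing the points $z_0<\rho_2<r_3<\rho_4<\cdots<r_{j-1}<\rho_j$ yields $j-1$ real zeros inside a disk known to contain exactly $j$ zeros, forcing all of them to be real; letting $j\to\infty$ finishes the proof. If you want to salvage your section-based plan, note that without an exact zero count in each disk you would need a full complement of $n$ sign changes for a degree-$n$ section, including near its outermost zeros where the section no longer resembles $f$; the Rouch\'e count is what lets the paper get away with only $j-1$ sign changes.
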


The following theorem is a sufficient 
condition for the existence of such a point $z_0 $  for 
the case $q_2(f) < 4.$

{\bf Theorem H}  (T.~H.~Nguyen, A.~Vishnyakova,  \cite{ngthv3}).
{\it Let $f(z)=\sum_{k=0}^\infty a_k z^k $, $a_k > 0$ for all $k,$  be an 
entire function and  $3  \leq  q_2(f) < 4, q_3(f) \geq 2,$ and $q_4(f) \geq 3.$ If $q_3(f) 
\leq \frac{8}{d(4-d)},$ where 
$d = \min(q_2(f), q_4(f)),$ then  there exists $z_0 \in [-\frac{a_1}{a_2},0]$ 
such that $f(z_0) \leq 0$.}  

Hence, in our case, the sufficient condition is  $b \leq \frac{8}{a(4-a)}.$

The following theorem  gives  a necessary condition for an entire function to belong to the 
Laguerre--Pol\'ya class of type I, in terms of the second quotients of its Taylor coefficients $q_n$.

{\bf Theorem I}  (T.~H.~Nguyen, A.~Vishnyakova, \cite{ngthv3}).
{\it If $f(z) = \sum_{k=0}^\infty  a_k z^k,$ $a_k > 0$ for all $k,$   
belongs to the Laguerre--P\'olya I class, $q_2(f) < 4$ and $q_2(f) \leq q_3(f),$  then 
$$q_3(f) \leq \frac{- q_2(f)(2q_2(f)-9)+2(q_2(f)-3)\sqrt{q_2(f)(q_2(f)-3)}}{q_2(f)(4-q_2(f))}.$$ }

Thus, in our case, the necessary condition is 
$b \leq \frac{- a(2a-9)+2(a-3)\sqrt{a(a-3)}}{a(4-a)}.$

Our next result is the Theorem below.

\begin{Theorem}
\label{th:mthm2}
Let $f_{a,b}(z)=\sum_{k=0}^\infty a_k z^k $, $a_k > 0$ for all $k,$  be an 
entire function such that $q_2(f) = q_4(f) = q_6(f) = \ldots = a,$ and 
$q_3(f) = q_5(f) = q_7(f) = \ldots = b,$ $1 < a < b$. Then the following
statements are valid.

1. If $f$ belongs 
to the Laguerre--P\'olya I class, then $a \geq q_{\infty}.$ 

2. If the numbers $a, b, q_{\infty} \leq a <b,$ are such that 
$f_{a,b} \in \mathcal{L-P I},$ then for every $c, a < c< b$
we have $f_{a,c} \in \mathcal{L-P I}.$

3. If the numbers $a, b, q_{\infty} \leq a <b,$ are such that 
$f_{a,b} \in \mathcal{L-P},$ then for every $d, a < d< b$
we have $f_{d,b} \in \mathcal{L-P I}.$

\end{Theorem}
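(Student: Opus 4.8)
The plan is to deduce all three statements from the equivalence in Theorem \ref{th:mthm1}, which reduces membership in $\mathcal{L-P I}$ to the single sign condition that $f(z_0)\le 0$ for some $z_0\in[-\tfrac{a_1}{a_2},0]$. Since $a_0=a_1=1$ and $q_2(f)=a$ forces $a_2=1/a$, this interval is $[-a,0]$ for \emph{every} member of the family with even-quotient $a$, independently of the odd-quotient. Writing $z=-x$, the whole problem becomes one of controlling the sign of $f(-x)$ on $x\in[0,a]$ as the parameters move.

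The central tool will be a comparison lemma: if $a\ge 3$ and $a<c<b$, then $f_{a,c}(-x)\le f_{a,b}(-x)$ for all $x\in[0,a]$, and likewise $f_{d,b}(-x)\le f_{a,b}(-x)$ on $[0,a]$ whenever $a<d<b$. To prove the first inequality I would form $f_{a,b}(-x)-f_{a,c}(-x)=\sum_{n\ge 3}(-1)^{n+1}\Delta_n x^n$, where $\Delta_n=a_n(f_{a,c})-a_n(f_{a,b})\ge 0$ because lowering the odd-quotient increases each coefficient, and then pair consecutive terms as $\sum_{m\ge 2}x^{2m-1}\bigl(\Delta_{2m-1}-\Delta_{2m}x\bigr)$. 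Each bracket is nonnegative on $[0,a]$ exactly when $\Delta_{2m-1}\ge a\,\Delta_{2m}$. Writing the coefficients explicitly as $a_n=a^{-O_n}b^{-E_n}$, where $O_n$ and $E_n$ collect the exponents of the even- and odd-indexed quotients (both of size $\sim n^2/4$, but only piecewise-quadratic in $n$), this reduces to $a^{m-1}\bigl(c^{-(m-1)^2}-b^{-(m-1)^2}\bigr)\ge c^{-m(m-1)}-b^{-m(m-1)}$, whose tightest instance $b\to c^{+}$ reads $a^{m-1}\tfrac{m-1}{m}\,c^{m-1}\ge 1$, valid for all $m\ge 2$ once $a,c\ge 3$. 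The analogous pairing of $f_{a,b}(-x)-f_{d,b}(-x)=\sum_{m\ge 1}x^{2m}\bigl(\delta_{2m}-\delta_{2m+1}x\bigr)$ reduces the second inequality to $\delta_{2m}\ge a\,\delta_{2m+1}$, handled the same way.

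Granting the lemma, the three parts follow quickly. For statement 2, Theorem \ref{th:mthm1} gives $x_0\in[0,a]$ with $f_{a,b}(-x_0)\le 0$; the lemma yields $f_{a,c}(-x_0)\le 0$, and Theorem \ref{th:mthm1} applied to $f_{a,c}$ (whose interval is again $[-a,0]$) returns $f_{a,c}\in\mathcal{L-P I}$. Statement 3 is identical: $f_{d,b}(-x_0)\le f_{a,b}(-x_0)\le 0$ and $x_0\in[0,a]\subset[0,d]$, so $-x_0$ lies in the interval $[-d,0]$ relevant to $f_{d,b}$, giving $f_{d,b}\in\mathcal{L-P I}$; here one first notes that for this order-zero family with positive coefficients, membership in $\mathcal{L-P}$ already forces membership in $\mathcal{L-P I}$. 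For statement 1, I would first invoke Lemma F to obtain $a\ge 3$, which is precisely what the comparison lemma requires (and, crucially, \emph{not} $a\ge q_\infty$, so no circularity arises). Then, exactly as in statement 2, $f_{a,c}\in\mathcal{L-P I}$ for every $c\in(a,b)$; letting $c\to a^{+}$, the functions $f_{a,c}$ converge locally uniformly to the partial theta function $g_{\sqrt a}$ (all of whose second quotients equal $a$), and closedness of $\mathcal{L-P}$ under such limits (Theorem A) forces $g_{\sqrt a}\in\mathcal{L-P}$, whence $a\ge q_\infty$ by Theorem E(1).

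The main obstacle I anticipate is the comparison lemma itself: establishing the entire family $\Delta_{2m-1}\ge a\,\Delta_{2m}$ and $\delta_{2m}\ge a\,\delta_{2m+1}$ simultaneously for all $m$ and all admissible $b$. The difficulty is twofold: the exponents $O_n,E_n$ depend on the parity of $n$, so no single closed form drives the estimate, and one must confirm that the relevant ratio is monotone in $b$ so that the bound is genuinely tightest in the limit $b\to c$ rather than at an interior value. Verifying that monotonicity, and checking the boundary inequality $a^{m-1}\tfrac{m-1}{m}c^{m-1}\ge 1$ for every $m$ with only $a,c\ge 3$ at hand, is where the real work lies; everything else is bookkeeping on top of Theorem \ref{th:mthm1}.
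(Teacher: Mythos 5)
Your proposal is correct, and for statements 2 and 3 it is essentially the paper's argument: both reduce everything to the sign criterion of Theorem \ref{th:mthm1} and both establish the comparison by pairing the terms of index $2m-1,2m$ (resp.\ $2m,2m+1$) and exploiting monotonicity in the odd (resp.\ even) quotient on $x\in(1,a]$. One remark on the part you flag as ``where the real work lies'': the monotonicity-in-$b$ verification disappears if, instead of working with the finite differences $\Delta_n$ and asking where the ratio is tightest, you simply differentiate each two-term bracket $-a_{2m-1}x^{2m-1}+a_{2m}x^{2m}$ with respect to the parameter. The paper does exactly this and the derivative is nonnegative precisely when $x\le\frac{m-1}{m}a^m\beta^{m-1}$, a condition that holds for every $\beta$ in the whole parameter range $[c,b]$ once $x\le a$ and $a,c\ge 3$; integrating in $\beta$ then gives your $\Delta_{2m-1}\ge a\,\Delta_{2m}$ with no separate monotonicity lemma, and your ``boundary inequality'' $a^{m-1}\frac{m-1}{m}c^{m-1}\ge 1$ is exactly the derivative condition at $\beta=c$. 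Where you genuinely diverge from the paper is statement 1: you prove $f_{a,c}\in\mathcal{L-P}I$ for all $c\in(a,b)$ (using only $a\ge 3$ from Lemma F, correctly avoiding circularity) and then let $c\to a^{+}$, invoking closedness of $\mathcal{L-P}$ under locally uniform limits to place the rescaled partial theta function $g_{\sqrt a}(\sqrt a\,z)$ in $\mathcal{L-P}$ and concluding via Theorem E(1). The paper instead pushes the same monotonicity all the way down to $b=a$ \emph{pointwise}: it shows $f_{a,b}(-x)\ge g_{\sqrt a}(-\sqrt a\,x)$ on $(1,a]$, so the negativity point $z_0$ for $f_{a,b}$ is directly a negativity point for $g_{\sqrt a}$ in $(\sqrt a,(\sqrt a)^3]$, and Theorem E(2) — the sign criterion for the partial theta function — gives $a\ge q_\infty$ with no limiting process. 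Your route costs an appeal to the limit-closure of $\mathcal{L-P}$ (which Theorem A only states for polynomial approximants, so you would need to cite the standard entire-function version) but uses only Theorem E(1); the paper's route is more self-contained but leans on the finer statement E(2). Both are valid, and you should also keep your explicit observation (left implicit in the paper) that for this order-zero family with positive coefficients $\mathcal{L-P}$ membership already forces $\mathcal{L-P}I$ membership, which is needed to apply Theorem G in statements 1 and 3.
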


Note that, by Theorem C and Theorem D, every new entire function from
the $\mathcal{L-P } I$ class generates a new multiplier sequence and
a new complex zero decreasing sequence. So, we obtain the following 
direct corollary from Theorem \ref{th:mthm1}.

\begin{Corollary}
\label{th:mthm3}
Let $f(z)=\sum_{k=0}^\infty a_k z^k $, $a_k > 0$ for all $k,$  be an 
entire function. Suppose that the quotients $q_n(f)$ satisfy the 
following condition: $q_2(f) = q_4(f) = q_6(f) = \ldots = a,$ and 
$q_3(f) = q_5(f) = q_7(f) = \ldots = b,$ $1 < a < b$. Suppose that 
there exists $z_0 \in [-\frac{a_1}{a_2},0]$ such that $f(z_0) \leq 0$ (for example,
this condition is fulfilled if   $a <b \leq \frac{8}{a(4-a)}$). Then
$(k! a_k)_{k=0}^\infty \in \mathcal{MS}$  and 
$(f(k))_{k=0}^\infty \in \mathcal{CZDS}.$
\end{Corollary}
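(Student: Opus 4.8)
The plan is to reduce everything to results already in hand, since the corollary is a direct consequence of Theorem~\ref{th:mthm1} together with the classical structure theorems for multiplier sequences and complex zero decreasing sequences. First I would check that the hypotheses of Theorem~\ref{th:mthm1} are met verbatim: we have $q_2(f) = q_4(f) = \cdots = a$ and $q_3(f) = q_5(f) = \cdots = b$ with $1 < a < b$, and by assumption there exists $z_0 \in [-\frac{a_1}{a_2},0]$ with $f(z_0) \leq 0$. Theorem~\ref{th:mthm1} then gives $f \in \mathcal{L-P}I$. To justify the parenthetical sufficient condition $a < b \leq \frac{8}{a(4-a)}$, I would invoke Theorem~H: in our setting $q_2(f) = q_4(f) = a$, so $d = \min(q_2(f),q_4(f)) = a$, and the inequality $q_3(f) = b \leq \frac{8}{d(4-d)} = \frac{8}{a(4-a)}$ is exactly what guarantees such a point $z_0$.

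For the multiplier sequence assertion, the natural move is to set $\gamma_k := k!\,a_k$ and form the associated power series $\Phi(z) = \sum_{k=0}^\infty \frac{\gamma_k}{k!} z^k = \sum_{k=0}^\infty a_k z^k = f(z)$. Since $f \in \mathcal{L-P}I$, Definition~2 supplies a representation $f(z) = c\, z^n e^{\beta z} \prod_{k=1}^\infty \left(1 + \frac{z}{x_k}\right)$ with $\beta \geq 0$, $x_k > 0$, and $\sum_{k=1}^\infty x_k^{-1} < \infty$, which is precisely the canonical form displayed in item~3 of Theorem~C. Applying the equivalence of items~1 and~3 of Theorem~C to $\Phi = f$ then yields $(\gamma_k)_{k=0}^\infty = (k!\,a_k)_{k=0}^\infty \in \mathcal{MS}$.

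For the complex zero decreasing sequence assertion, I would first note that $a_0 > 0$ forces $f(0) \neq 0$, so $n = 0$ in the representation above and every zero of $f$ is of the form $-x_k < 0$. Thus $f$ is an entire function of the Laguerre--P\'olya class (using $\mathcal{L-P}I \subset \mathcal{L-P}$) having only negative zeros, and Theorem~D of Laguerre applies directly to deliver $\left(f(k)\right)_{k=0}^\infty \in \mathcal{CZDS}$, completing the proof.

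The essential content and all the difficulty reside in Theorem~\ref{th:mthm1}; once the membership $f \in \mathcal{L-P}I$ is secured, both conclusions are formal consequences of the structure theorems. Accordingly, the only points that require care are bookkeeping ones: matching $\Phi = f$ to the product in item~3 of Theorem~C, and verifying that the absence of a zero at the origin (forced by $a_0 > 0$) places $f$ within the scope of Theorem~D. I expect no genuine obstacle beyond these identifications.
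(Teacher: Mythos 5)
Your proposal is correct and follows essentially the same route as the paper, which derives the corollary in one line by combining Theorem~\ref{th:mthm1} (to get $f\in\mathcal{L-P}I$) with Theorem~C applied to $\Phi=f$ (for the multiplier sequence) and Theorem~D (for the $\mathcal{CZDS}$ claim, using that $a_0>0$ forces all zeros to be negative). Your handling of the parenthetical condition via Theorem~H also matches the paper's own remark.
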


\section{Proof of Theorem \ref{th:mthm1}}

For an entire function $f(z) = \sum_{k=0}^\infty a_k z^k$  with 
real positive coefficients, without loss of generality, we can assume that $a_0=a_1=1,$ since we can 
consider a function $g(x) =a_0^{-1} f (a_0 a_1^{-1}x) $  instead of 
$f(x),$ due to the fact that such rescaling of $f$ preserves its property of 
having real zeros and preserves the second quotients:  $q_n(g) =q_n(f)$ 
for all $n.$ Throughout the paper, we use notation $p_n$ and $q_n$ instead of 
$p_n(f)$ and $q_n(f).$  

Thereafter, we consider a function  $$\varphi(x) = f(-x) = 1 - x + \sum_{k=2}^\infty  \frac{ (-1)^k x^k}
{q_2^{k-1} q_3^{k-2} \cdots q_{k-1}^2 q_k}$$  instead of $f.$

Firstly, in our case, $q_2 = a, q_3 = b, a < b,$  according to Theorem~F,  if 
$\varphi \in \mathcal{L-P}I,$ then there exists such a point $z_0 \in [0,  -\frac{a_1}{a_2}] 
= [0, a] $ that $\varphi (z_0) \leq 0.$

To prove the converse statement, we need the following lemma.

\begin{Lemma}
\label{th:lm1}
Let $\varphi(x) = \sum_{k=0}^\infty (-1)^k a_k x^k,$
$a_k >0,  k=0, 1, 2, \ldots ,$ be an entire function 
such that $$q_2 = q_4 = q_6 = \ldots = a \geq 3,$$
$$q_3 = q_5 = q_7 = \ldots = b > a.$$
For an arbitrary integer $j \geq 2$ we define 
$$\rho_j := q_2 q_3 \cdots  q_j \sqrt{q_{j+1}}. $$ 
Then, for all even $j = 2s, s\in \mathbb{N}$,  the function $\varphi$ has 
exactly $j$ zeros on the disk $\{x\  :\   |x| <  \rho_j \}$
counting multiplicities.
\end{Lemma}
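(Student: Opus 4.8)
The plan is to prove this by an argument based on Rouch\'e's theorem, comparing the entire function $\varphi$ on the circle $|x| = \rho_j$ against a suitable dominant partial sum, so that the number of zeros of $\varphi$ inside the disk coincides with the number of zeros of the comparison polynomial. First I would recall the explicit formula for the coefficients: writing $a_k = (q_2^{k-1} q_3^{k-2} \cdots q_{k-1}^2 q_k)^{-1}$ with the alternating pattern $q_2 = q_4 = \cdots = a$ and $q_3 = q_5 = \cdots = b$, one obtains $a_k$ as an explicit product of powers of $a$ and $b$. The scaling radius $\rho_j = q_2 q_3 \cdots q_j \sqrt{q_{j+1}}$ is chosen precisely so that the ratios of consecutive terms $|a_{k+1} x^{k+1}| / |a_k x^k| = \rho_j / (q_2 \cdots q_{k+1})$ cross the value $1$ between $k = j-1$ and $k = j$. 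In particular, on the circle $|x| = \rho_j$ the terms in the Taylor series increase in modulus up to the $j$-th term (which is the largest, or tied for largest) and strictly decrease afterward, because $q_2 = q_4 = \cdots = a \geq 3 > 1$ and $q_3 = q_5 = \cdots = b > a$ force all the partial products $q_2 q_3 \cdots q_k$ to grow geometrically.

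Next I would set up the Rouch\'e comparison. The natural candidate for the dominant term on $|x| = \rho_j$ is the single monomial $a_j x^j$, and I would try to show that its modulus exceeds the sum of the moduli of all the remaining terms, i.e.
\begin{equation*}
|a_j x^j| > \sum_{k \neq j} |a_k x^k| \quad \text{for } |x| = \rho_j .
\end{equation*}
If this strict inequality holds, then by Rouch\'e's theorem $\varphi$ and $a_j x^j$ have the same number of zeros inside the disk, namely $j$, which is exactly the claim. Dividing through by $|a_j x^j|$ and using $|x| = \rho_j$, the inequality becomes a statement about a convergent numerical series whose terms are ratios $|a_k| \rho_j^k / (a_j \rho_j^j)$; these ratios are products of factors of the form $(q_2 \cdots q_m)^{-1}\rho_j$ or $\rho_j(q_2\cdots q_m)^{-1}$ depending on whether $k < j$ or $k > j$, and they decay geometrically on both sides of $k = j$. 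The key computational input is that for the tail $k > j$ the ratio is governed by $\sqrt{q_{j+1}} / (q_{j+1} q_{j+2} \cdots)$ and for the head $k < j$ by the reciprocal products, all controlled by the lower bounds $a \geq 3$ and $b > a$.

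The main obstacle I anticipate is that the naive single-monomial domination inequality may \emph{fail} for small $a$ (near $3$) because the neighboring terms $a_{j-1} x^{j-1}$ and $a_{j+1} x^{j+1}$ are comparable in size to $a_j x^j$ on the critical circle. The factor $\sqrt{q_{j+1}}$ in the definition of $\rho_j$ is precisely what balances the $j$-th term against its two neighbors, so on $|x| = \rho_j$ one expects $|a_{j-1} x^{j-1}|$, $|a_j x^j|$, and $|a_{j+1} x^{j+1}|$ to all be of the same order. Hence a crude Rouch\'e bound against $a_j x^j$ alone cannot succeed; instead I would compare $\varphi$ against the three-term (or few-term) central block $a_{j-1} x^{j-1} - a_j x^j + a_{j+1} x^{j+1}$, or more robustly against a longer central polynomial section, and show that this block dominates the combined head and tail remainder on the circle. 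One must then separately verify that the chosen central block itself contributes exactly $j$ zeros inside $|x| < \rho_j$ --- this is where the evenness hypothesis $j = 2s$ and the sign pattern of the alternating coefficients enter, ensuring the block behaves like a real-rooted factor with the correct root count.

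To make the central-block comparison rigorous, I would estimate the two remainder sums. For the tail $\sum_{k > j+1} |a_k| \rho_j^k$ I would factor out $|a_{j+1}| \rho_j^{j+1}$ and bound the remaining geometric-type series using that each successive ratio is at most $\rho_j/(q_2 \cdots q_{j+2}) < 1$ with a uniform gap coming from $a \geq 3$; similarly for the head $\sum_{k < j-1} |a_k| \rho_j^k$. The delicate point is to make these two geometric bounds small enough that the central block's minimum modulus on the circle still dominates, which should reduce to a finite set of explicit inequalities in $a$ and $b$ valid throughout the range $3 \leq a < b$. Once the remainder is shown to be strictly smaller than the central block on $|x| = \rho_j$, Rouch\'e gives the conclusion, and the evenness of $j$ guarantees that $\rho_j$ sits strictly between the moduli of consecutive terms rather than exactly at a tie, so no zeros of $\varphi$ lie on the boundary circle and the count is clean.
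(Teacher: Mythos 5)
Your overall strategy is exactly the paper's: split $\varphi$ on the circle $|x|=\rho_j$ into a central block plus head and tail, bound the head and tail by geometric series, apply Rouch\'e, and then count the zeros of the block inside the disk. You also correctly diagnose that the single monomial $a_jx^j$ cannot dominate, since its two neighbours are comparable on the critical circle. However, the proposal stops short of the two ideas that carry all the difficulty, and the one concrete candidate you do name would fail. If you take the three-term block $a_{j-1}x^{j-1}-a_jx^j+a_{j+1}x^{j+1}$ and normalize with $w=x/\rho_j$, it becomes (up to a nonzero factor) $1-\sqrt{q_{j+1}}\,w+w^2$, whose modulus on $|w|=1$ equals $|2\cos\theta-\sqrt{q_{j+1}}|$; this \emph{vanishes} on the circle whenever $q_{j+1}=b\leq 4$, which is squarely inside the range the lemma must cover (recall $3\leq a<b$ with $a\in[3,4)$ is the interesting case). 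So Rouch\'e cannot be run against that block, and "a longer central polynomial section" is left unspecified.

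The missing idea is the specific choice of the five-term block $g_j=\sum_{k=j-2}^{j+2}(-1)^ka_kx^k$. Because the quotients are $2$-periodic ($q_j=q_{j+2}$), on $|x|=\rho_j$ this block reduces to $w^{j-2}$ times the \emph{self-reciprocal} quartic $P_j(w)=1-q_j\sqrt{q_{j+1}}\,w+q_jq_{j+1}w^2-q_j\sqrt{q_{j+1}}\,w^3+w^4$. The palindromic symmetry does double duty: (i) on $|w|=1$ one can factor out $w^2$ and get the real quantity $2\cos 2\theta-2q_j\sqrt{q_{j+1}}\cos\theta+q_jq_{j+1}$, a quadratic in $t=\cos\theta$ whose minimum on $[-1,1]$ is attained at $t=1$ and is strictly positive for $a\geq 3$ (this is the needed lower bound for Rouch\'e, and it is also where the evenness of $j$ and the hypothesis $a\geq3$ actually enter, via $q_j=a$, $q_{j+1}=b$); and (ii) since $P_j$ is self-reciprocal with no zeros on $|w|=1$, its four zeros pair off as $w\leftrightarrow 1/w$, so exactly two lie in $|w|<1$, giving $g_j$ exactly $(j-2)+2=j$ zeros in the disk. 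Without this block and its symmetry you have neither a positive lower bound on the circle nor a clean zero count, so as written the argument has a genuine gap. Also, your closing remark that evenness of $j$ prevents a "tie" on the boundary is off target: the terms $k=j-1$ and $k=j+1$ are always tied in modulus on $|x|=\rho_j$; evenness matters instead for the sign analysis of the quadratic in $\cos\theta$ and for the final numerical inequality in $a$ and $b$.
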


\begin{proof}
For $j \geq 2$ we have

$$
\varphi(x) = \sum_{k=0}^\infty \frac{(-1)^k x^k}{q_2^{k-1}q_3^{k-2} \cdots q_k} = 
\bigg(\sum_{k=0}^{j-3} + \sum_{k = j-2}^{j+2} + 
\sum_{k = j+3}^{\infty} \bigg)=: $$
$$ \Sigma_{1,j}(x) + g_j(x) + \Sigma_{2,j}(x).
$$

In more details,
\begin{align}
\label{ll1}
&  g_j(x)  =   \frac{(-1)^{j-2} x^{j-2}}{q_2^{j-3}q_3^{j-4} \cdots q_{j-2}} 
+ \frac{(-1)^{j-1} x^{j-1}}{q_2^{j-2}q_3^{j-3} \cdots q_{j-2}^2q_{j-1}} 
+ \\  \nonumber & \frac{(-1)^{j} x^{j}}{q_2^{j-1}q_3^{j-2} \cdots q_{j-2}^3q_{j-1}^2q_{j}} 
+ \frac{(-1)^{j +1} x^{j + 1}}{q_2^{j}q_3^{j-1} \cdots q_{j-2}^4 q_{j-1}^3 q_{j}^2 q_{j+1} } +
\\  \nonumber  & \frac{(-1)^{j +2} x^{j + 2}}{q_2^{j+1}q_3^{j} 
\cdots q_{j-2}^5 q_{j-1}^4 q_{j}^3 q_{j+1}^2 q_{j+2} }  =  
\frac{(-1)^{j-2} x^{j-2}}{q_2^{j-3}q_3^{j-4} \cdots q_{j-2}} \cdot \left( 1 - \right.
 \frac{x}{q_2 q_3 \cdots q_{j-2} q_{j-1} } \\  \nonumber  & +  
 \frac{x^2}{q_2^2 q_3^2 \cdots q_{j-2}^2 q_{j-1}^2 q_j }
 - \frac{x^3}{q_2^3 q_3^3 \cdots q_{j-2}^3 q_{j-1}^3 q_j^2 q_{j+1} } +
 \frac{x^4}{q_2^4 q_3^4 \cdots q_{j-2}^4 q_{j-1}^4 q_j^3 q_{j+1}^2 q_{j+2} } \left. \right). 
\end{align}

By the definition of $\rho_j$ we have $q_2q_3 \cdots q_j  < 
\rho_j < q_2q_3 \cdots q_j q_{j+1}$.
We get
\begin{align}
\label{ll2}
&
g_j(\rho_je^{i\theta}) = 
(-1)^{j-2}e^{i(j-2)\theta}q_2q_3^2 \cdots q_{j-2}^{j-3}q_{j-1}^{j-2}q_j^{j-2}
q_{j+1}^{\frac{j-2}{2}}  \\ \nonumber &\quad\times
 \bigg( 1 - q_j \sqrt{q_{j+1}}e^{i\theta}
+ q_jq_{j+1}e^{2i\theta} - q_j\sqrt{q_{j+1}}e^{3i\theta} + 
\frac{q_j}{q_{j+2}}e^{4i\theta}\bigg) \\  \nonumber &
= (-1)^{j-2}e^{i(j-2)\theta}q_2q_3^2 \cdots 
q_{j-2}^{j-3}q_{j-1}^{j-2}q_j^{j-2}q_{j+1}^{\frac{j-2}{2}}  \\
\nonumber & \quad \times \left( 1 - e^{i\theta}q_j \sqrt{q_{j+1}}
+ e^{2i\theta}q_jq_{j+1} - e^{3i\theta}q_j\sqrt{q_{j+1}} + 
e^{4i\theta}\right) ,
\end{align}
since, by our assumptions, $q_j = q_{j+2}.$

Our aim is to show that for every even $j$ the following inequality holds:  
\begin{align*}
\min_{0\leq \theta\leq 2\pi}|g_j(\rho_j e^{i\theta})| > 
\max_{0\leq \theta\leq 2\pi}| \varphi
(\rho_j e^{i\theta}) -g_j(\rho_j e^{i\theta})|, 
\end{align*} 
so that the number of zeros of  $\varphi$  in the circle $\{x : |x| < \rho_j  \}$  
is equal to  the number of zeros of $g_j$ in the same circle. 
Subsequently in the proof, we also find the number of zeros of $g_j$ in this circle. 
First, we find $\min_{0 \leq \theta \leq 2\pi}  |g_j(\rho_je^{i\theta})|.$
We obtain
\begin{align}
\label{ll3}
&
g_j(\rho_je^{i\theta}) 
=  (-1)^{j-2}e^{ij\theta}q_2q_3^2 \cdots 
q_{j-2}^{j-3}q_{j-1}^{j-2}q_j^{j-2}q_{j+1}^{\frac{j-2}{2}} \\
 \nonumber &\quad \times  \bigg( 2\cos 2\theta - 
2\cos\theta q_j \sqrt{q_{j+1}}
+ q_jq_{j+1} \bigg) \\
 \nonumber & =: (-1)^{j-2} e^{ij\theta}q_2q_3^2 \cdots q_{j-2}^{j-3}
q_{j-1}^{j-2}q_j^{j-2}q_{j+1}^{\frac{j-2}{2}} 
\cdot  \psi_j(\theta).
\end{align}

We consider $\psi_j(\theta)$ as following:
\begin{align*}
\psi_j(\theta) =\widetilde{\psi}_j(t) := 4t^2 - 2q_j
\sqrt{q_{j+1}}t + (q_jq_{j+1} - 2),
\end{align*}
where $t:= \cos \theta,$ and where we have used 
that $\cos 2\theta = 2t^2 - 1.$

The vertex of the parabola is $t_j = q_j\sqrt{q_{j+1}}/4.$ Under our assumptions, 
$q_2 = q_4 = q_6 = \ldots = a \geq 3,$ and
$q_3 = q_5 = q_7 = \ldots = b > a,$  so that  $t_j > 1.$
Hence, $\min_{t \in [-1, 1]} \widetilde{\psi}_j(t) = \widetilde{\psi}_j(1) =
2 - 2q_j\sqrt{q_{j+1}} + q_j q_{j+1} = q_j\sqrt{q_{j+1}}(\sqrt{q_{j+1}} -2)  + 2.$
We want to show that $\widetilde{\psi}_j(1) >0,$ whence $\min_{t \in [-1, 1]} |\widetilde{\psi}_j(t) |
= \widetilde{\psi}_j(1).$ 
If  $q_{j+1} \geq 4,$ then $q_j\sqrt{q_{j+1}}(\sqrt{q_{j+1}} -2)  + 2>0. $
If $q_{j+1} <4$, taking into account the fact that $j$ is an even integer, we have
 $q_j = a, q_{j+1} = b, a < b <4,$ and then
$$ q_j\sqrt{q_{j+1}}(\sqrt{q_{j+1}} -2)  + 2 = a \sqrt{b} (\sqrt{b} - 2) + 2$$
$$ \geq a \sqrt{a} (\sqrt{a} - 2) + 2 = a^2 - 2a \sqrt{a} + 2.
$$

 Next, denote by $y = \sqrt{a} \geq 0,$ and
$g(y) = y^4 - 2 y^3 +2.$ It is easy to calculate that $\min_{y \geq 0} g(y) =
g(\frac{3}{2}) = \frac{5}{16} >0.$
Consequently,  we get
$$2 - 2q_j\sqrt{q_{j+1}} + q_j q_{j+1}> 0.$$
Thus, $\widetilde{\psi}_j(t) > 0$ for all $t \in [-1, 1]$, and we have obtained the estimate from below:
\begin{eqnarray}
\label{estg}
& \min_{0\leq \theta\leq 2\pi}|g_j(\rho_j e^{i\theta})| = q_2q_3^2 \cdots 
q_{j-2}^{j-3}q_{j-1}^{j-2}q_j^{j-2}q_{j+1}^{\frac{j-2}{2}} 
\\ \nonumber & \times  \bigg(2 - 2q_j\sqrt{q_{j+1}} 
+ q_jq_{j+1} \bigg).
\end{eqnarray}

Second, we estimate the modulus of $\Sigma_1$ from above. We have
\begin{align}
\label{ll4}
&
|\Sigma_1(\rho_j e^{i \theta})| 
\leq  \sum_{k = 0}^{j-3} \frac{q_2^kq_3^k \cdots
q_j^k q_{j+1}^{\frac{k}{2}}}{q_2^{k-1}q_3^{k-2} \cdots q_k}= 
\\   \nonumber  &\quad(\mbox{we rewrite 
the sum from right} 
 \mbox{  to left})\\   \nonumber &\quad
 = \left( q_2 q_3^2\cdots q_{j-3}^{j-4}q_{j-2}^{j-3}q_{j-1}^{j-3} q_j^{j-3} 
q_{j+1}^{\frac{j-3}{2}} + \right.
\left. q_2 q_3^2 \cdots q_{j-4}^{j-5} q_{j-3}^{j-4}q_{j-2}^{j-4} 
q_{j-1}^{j-4} q_j^{j-4} q_{j+1}^{\frac{j-4}{2}}  \right. \\
 \nonumber &\qquad + \left.   q_2 q_3^2 \cdots q_{j-5}^{j-6} q_{j-4}^{j-5}
q_{j-3}^{j-5}q_{j-2}^{j-5}q_{j-1}^{j-5} q_j^{j-5} 
q_{j+1}^{\frac{j-5}{2}} +\cdots    \right) \\
 \nonumber &\quad = q_2 q_3^2 \cdots q_{j-3}^{j-4}q_{j-2}^{j-3}q_{j-1}^{j-3} 
 q_j^{j-3} q_{j+1}^{\frac{j-3}{2}} \\
 \nonumber &\qquad \times \left( 1+ \frac{1}{q_{j-2}q_{j-1}q_{j}
 \sqrt{q_{j+1}}}\right. \left.    +  \frac{1}{q_{j-3}q_{j-2}^2 
 q_{j-1}^2q_{j}^2(\sqrt{q_{j+1}})^2} +
 \cdots\right) \\
 \nonumber & \quad \leq   q_2 q_3^2 \cdots q_{j-3}^{j-4}q_{j-2}^{j-3}
 q_{j-1}^{j-3} q_j^{j-3}q_{j+1}^{\frac{j-3}{2}}\cdot 
 \frac{1}{1- \frac{1}{q_{j-2}q_{j-1}q_{j}\sqrt{q_{j+1}}}}
\end{align}

 (we estimate the finite sum  from above by the sum of the infinite geometric 
 progression). Finally, we obtain
\begin{align}
& |\Sigma_1(\rho_j e^{i \theta})| 
\leq q_2 q_3^2 \cdots q_{j-3}^{j-4}q_{j-2}^{j-3} 
q_{j-1}^{j-3} q_j^{j-3} q_{j+1}^{\frac{j-3}{2}}\times 
 \frac{1}{1- \frac{1}{q_{j-2}q_{j-1}q_{j}\sqrt{q_{j+1}}}} .
\end{align}

Next, the estimation of $|\Sigma_2(\rho_j e^{i\theta})|$ from above can be made analogously:
\begin{multline*}
|\Sigma_2(\rho_j e^{i \theta})|
\leq  \sum_{k = j+3}^\infty \frac{q_2^kq_3^k \cdots 
q_j^k q_{j+1}^{\frac{k}{2}}}{q_2^{k-1}q_3^{k-2} \cdots q_k} 
= \frac{q_2q_3^2 \cdots q_j^{j-1} q_{j+1}^{\frac{j-3}{2}}}{q_{j+2}^2q_{j+3}}\\
\times\bigg( 1 + \frac{1}{\sqrt{q_{j+1}}q_{j+2}q_{j+3}q_{j+4}} + \frac{1}{(\sqrt{q_{j+1}})^2 
q_{j+2}^2q_{j+3}^2q_{j+4}^2q_{j+5}} + \cdots \bigg).
\end{multline*}
The latter can be estimated from above by the sum of the geometric progression, so, we obtain
\begin{equation}
|\Sigma_2(\rho_j e^{i \theta})|\leq \frac{q_2q_3^2 \cdots q_j^{j-1} 
q_{j+1}^{\frac{j-3}{2}}}{q_{j+2}^2 q_{j+3}}
\times \frac{1}{1 - \frac{1}{\sqrt{q_{j+1}}q_{j+2} q_{j+3} q_{j+4}}}.
\end{equation}

Therefore, the desired inequality $\min_{0\leq \theta\leq 2\pi}|g_j(\rho_j e^{i\theta})| > 
\max_{0\leq \theta\leq 2\pi}|\varphi (\rho_j e^{i\theta}) -g_j(\rho_j e^{i\theta})|$ 
follows from
\begin{align*}
& q_2q_3^2 \cdots q_{j-2}^{j-3}q_{j-1}^{j-2}q_j^{j-2}q_{j+1}^{\frac{j-2}{2}} 
\cdot  \bigg(2 - 2q_j\sqrt{q_{j+1}} + q_jq_{j+1} \bigg) \\ 
&\quad > q_2 q_3^2 \cdots q_{j-3}^{j-4}q_{j-2}^{j-3} 
q_{j-1}^{j-3} q_j^{j-3} q_{j+1}^{\frac{j-3}{2}}
\times \frac{1}{1- \frac{1}{q_{j-2}q_{j-1}q_{j}\sqrt{q_{j+1}}}} \\
&\qquad + \frac{q_2q_3^2 \cdots q_j^{j-1} 
q_{j+1}^{\frac{j-3}{2}}}{q_{j+2}^2 q_{j+3}}
\times \frac{1}{1 - \frac{1}{\sqrt{q_{j+1}}q_{j+2} q_{j+3} q_{j+4}}} .
\end{align*} 
Or, equivalently, 
\begin{align}
\label{estqq}
& q_{j-1}q_j\sqrt{q_{j+1}} \bigg(2 - 2q_j\sqrt{q_{j+1}} + q_jq_{j+1}\bigg)\\  
\nonumber & \quad > \frac{1}{1 - \frac{1}{q_{j-2}q_{j-1}q_j\sqrt{q_{j+1}}}} + 
\frac{q_{j-1}q_j^2}{q_{j+2}^2q_{j+3}} \cdot \frac{1}{1 - 
\frac{1}{\sqrt{q_{j+1}}q_{j+2}q_{j+3}q_{j+4}}} = \\  
\nonumber &   \frac{1}{1 - \frac{1}{q_{j-2}q_{j-1}q_j\sqrt{q_{j+1}}}} + 
 \frac{1}{1 - 
\frac{1}{\sqrt{q_{j+1}}q_{j+2}q_{j+3}q_{j+4}}}
\end{align}
by our assumptions on the sequence $(q_k)_{k=2}^\infty.$ Since $j$ is an even number,
we have $q_{j-2}=q_j = q_{j+2} =q_{j+4}=a$ and $q_{j-1}= q_{j+1} =q_{j+3}= b,$
so the last inequality has the form

\begin{equation}
\label{esta} 
b \sqrt{b} a (2 - 2 \sqrt{b} a + ab) > 
\frac{2}{1 - \frac{1}{b \sqrt{b} a^2}}. 
\end{equation}

Since $a \geq 3, b \geq 3$, the desired inequality (\ref{esta}) follows from
\begin{align*}
9\sqrt{3} (2 - 2 \sqrt{b} a + ab) > \frac{2}{1 - \frac{1}{27 \sqrt{3}}}
\end{align*}

or, equivalently, 
\begin{align*}
ab  - 2 \sqrt{b} a + 2 - \frac{6}{27\sqrt{3} - 1} > 0.
\end{align*}

After refactoring, we have
\begin{align*}
a (\sqrt{b} - 1)^2 - a  + 2 - \frac{6}{27\sqrt{3} - 1} > 0.
\end{align*}

Next, we divide both sides of the inequality by $a$ and get

\begin{align*}
(\sqrt{b} - 1)^2 > 1 + \frac{8 - 54\sqrt{3}}{a(27\sqrt{3} - 1)}.
\end{align*}

Since $8 - 54\sqrt{3}<0, $  the inequality above follows from
\begin{align*}
(\sqrt{\alpha} - 1)^2 > 1 + \frac{8 - 54\sqrt{3}}{4 \cdot (27\sqrt{3} - 1)}.
\end{align*}
Numerical calculations shows that $1 + \frac{8 - 54\sqrt{3}}{4 \cdot (27\sqrt{3} - 1)} < 0.53279.$
Thus, the inequality is fulfilled for $\sqrt{\alpha}  > 1{.}72993$, and, therefore, for 
$\alpha > 2{.}99266.$ Under our assumptions, $\alpha \geq 3 ,$ so the inequality (\ref{esta})
is valid.

Consequently, we have proved that for all even $j$
$\min_{0\leq \theta\leq 2\pi}|g_j(\rho_j e^{i\theta})| > 
\max_{0\leq \theta\leq 2\pi} |\varphi (\rho_j e^{i\theta}) -g_j(\rho_j e^{i\theta})|, $ 
so the number of zeros of 
$\varphi$ in the circle $\{x:|x| < \rho_j\}$ is equal to the number of zeros of 
$g_j$ in this circle.

In the next stage of the proof, it remains to find the number of zeros 
of $g_j$ in the circle $\{x:|x| < \rho_j\}$. 

Let us use the denotation $w = x \rho_j^{-1},$ so that $|w|<1.$ This yields

\begin{multline*}
g_j(\rho_j w) = (-1)^{j-2}w^{j-2}q_2q_3^2 \cdots 
q_{j-2}^{j-3}q_{j-1}^{j-2}q_j^{j-2}q_{j+1}^{\frac{j-2}{2}} \\
\times(1 - q_j\sqrt{q_{j+1}}w + q_jq_{j+1}w^2 - q_j\sqrt{q_{j+1}}w^3 + w^4).
\end{multline*}

Therefore, it follows from (\ref{estg}) that $g_j$ does not have zeros on the circumference $\{x : |x| = \rho_j  \},$
whence $g_j(\rho_j w)$ does not have zeros on the circumference $\{w : |w| = 1  \}.$ Since 
$P_j(w) =1-q_j\sqrt{q_{j+1}} w +q_j q_{j+1} w^2 - q_j\sqrt{q_{j+1}} w^3 +w^4 $  is a 
self-reciprocal polynomial in $w,$ we can conclude that $P_j$ has exactly two zeros 
in the circle $\{ w :|w| <1 \}.$
Hence, $g_j(x)$ has exactly $j$ zeros in the circle $\{x : |x| < \rho_j  \}$  for all even $j$, and we have 
proved the statement of  Lemma~\ref{th:lm1}.
\end{proof}

\begin{Lemma}
\label{th:lm3} Let $\varphi(x) = \sum_{k=0}^\infty (-1)^k a_k x^k,$
$a_k >0,  k=0, 1, 2, \ldots ,$ be an entire function 
such that $$q_2 = q_4 = q_6 = \ldots = a \geq 3,$$
$$q_3 = q_5 = q_7 = \ldots = b > a.$$
For an arbitrary integer $j \geq 2$ we define 
$$\rho_j := q_2 q_3 \cdots  q_j \sqrt{q_{j+1}}. $$ 
Then for every even  $j =2s, s\in \mathbb{N},$  
the following inequality holds: 
$$ \varphi(\rho_j) \geq 0.$$
\end{Lemma}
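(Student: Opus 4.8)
The plan is to observe that $\varphi(\rho_j)$ is simply the value of $\varphi$ at the point $\rho_j e^{i\theta}$ with $\theta = 0$, so that the entire apparatus built in the proof of Lemma~\ref{th:lm1} applies on the positive real axis with no new estimate needed. Concretely, I would reuse the decomposition $\varphi = \Sigma_{1,j} + g_j + \Sigma_{2,j}$ and evaluate each piece at $x = \rho_j$, then show that the dominant term $g_j(\rho_j)$ is positive and large enough to absorb the remainder.

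First I would pin down the sign and size of the central term $g_j(\rho_j)$. Setting $\theta = 0$ in the representation (\ref{ll3}) and using that $j$ is even (so that $(-1)^{j-2} = 1$ and $e^{ij\cdot 0} = 1$), one obtains $g_j(\rho_j) = q_2 q_3^2 \cdots q_{j-2}^{j-3} q_{j-1}^{j-2} q_j^{j-2} q_{j+1}^{\frac{j-2}{2}} \cdot \widetilde{\psi}_j(1)$, a positive multiple of $\widetilde{\psi}_j(1) = 2 - 2 q_j\sqrt{q_{j+1}} + q_j q_{j+1}$. In the proof of Lemma~\ref{th:lm1} it was already shown that $\widetilde{\psi}_j(1) > 0$ and that $\widetilde{\psi}_j$ attains its minimum over $[-1,1]$ precisely at $t = 1$ (the vertex $t_j = q_j\sqrt{q_{j+1}}/4 > 1$ lies to the right of the interval). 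Since $t = \cos\theta = 1$ corresponds exactly to $\theta = 0$, this means $g_j(\rho_j)$ is positive and equals the minimum modulus $\min_{0 \leq \theta \leq 2\pi}|g_j(\rho_j e^{i\theta})|$ recorded in (\ref{estg}).

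Next I would control the remainder $\Sigma_{1,j}(\rho_j) + \Sigma_{2,j}(\rho_j) = \varphi(\rho_j) - g_j(\rho_j)$. By the triangle inequality its modulus is at most $\max_{0 \leq \theta \leq 2\pi}|\varphi(\rho_j e^{i\theta}) - g_j(\rho_j e^{i\theta})|$, and the central inequality established for even $j$ in Lemma~\ref{th:lm1} (the one whose verification reduces to (\ref{esta})) states precisely that $\min_\theta |g_j(\rho_j e^{i\theta})| > \max_\theta |\varphi(\rho_j e^{i\theta}) - g_j(\rho_j e^{i\theta})|$. Combining this with the previous step gives
$$\varphi(\rho_j) = g_j(\rho_j) + \big(\Sigma_{1,j}(\rho_j) + \Sigma_{2,j}(\rho_j)\big) \geq g_j(\rho_j) - \big|\Sigma_{1,j}(\rho_j) + \Sigma_{2,j}(\rho_j)\big| > 0,$$
which yields the claim, in fact with strict inequality.

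The one point to get right is the sign bookkeeping at $\theta = 0$: one must verify that evaluation on the positive real axis both realizes the minimum of $|g_j|$ on the circle \emph{and} does so with a positive sign, since it is exactly this positivity (not merely a modulus bound) that converts the two-sided estimate of Lemma~\ref{th:lm1} into the one-sided conclusion $\varphi(\rho_j) \geq 0$. Everything else is already contained in Lemma~\ref{th:lm1}, so Lemma~\ref{th:lm3} is essentially a specialization of that lemma to the point $\theta = 0$, and I do not expect any genuinely new obstacle.
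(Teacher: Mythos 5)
Your argument is correct, but it is a genuinely different route from the paper's. The paper proves Lemma~\ref{th:lm3} directly, without invoking the circle estimates of Lemma~\ref{th:lm1}: it uses the fact that $\rho_j$ lies in $(q_2\cdots q_j,\ q_2\cdots q_j q_{j+1})$, so the moduli of the Taylor terms increase up to index $j$ and decrease afterwards; grouping consecutive terms then gives $\varphi(\rho_j)\ge \mu_j(\rho_j):=\sum_{k=j-3}^{j+3}(-1)^k\rho_j^k/(q_2^{k-1}\cdots q_k)$, and the seven-term sum is shown to be nonnegative by an explicit computation reducing to $a^2b^2(\sqrt{b}-2)+2(ab\sqrt{b}-1)\ge 0$. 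Your proof instead specializes the Rou\-ch\'e-type estimate of Lemma~\ref{th:lm1} to $\theta=0$: the sign bookkeeping you flag is indeed the only delicate point, and it works out --- for even $j$ the prefactor $(-1)^{j-2}e^{ij\theta}$ equals $1$ at $\theta=0$, $\widetilde{\psi}_j(\cos 0)=\widetilde{\psi}_j(1)$ is both positive and the minimum of $\widetilde{\psi}_j$ on $[-1,1]$ (vertex to the right of $1$), so $g_j(\rho_j)=\min_\theta|g_j(\rho_je^{i\theta})|>0$ by (\ref{estg}), and the remainder $\Sigma_{1,j}(\rho_j)+\Sigma_{2,j}(\rho_j)$ is dominated via (\ref{esta}). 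Your version is more economical (no new computation) and even yields the strict inequality $\varphi(\rho_j)>0$. What the paper's version buys is independence from Lemma~\ref{th:lm1} and considerably more room in the hypotheses: its final polynomial inequality $t^5-2t^4+2t-\tfrac{2}{9}\ge 0$ holds for all $t$ well below $\sqrt{3}$, whereas your argument inherits the full strength of the assumption $b>a\ge 3$ needed for (\ref{esta}). Since Lemma~\ref{th:lm3} is only ever applied together with Lemma~\ref{th:lm1}, there is no logical loss in your approach.
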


\begin{proof}
Since  $ \rho_j  \in (q_2q_3 \cdots q_j, q_2q_3 \cdots q_j q_{j+1}),$ we have 
\begin{align*}
 1< \rho_j < \frac{\rho_j^2}{q_2} < \cdots < \frac{\rho_j^j}{q_2^{j-1}q_3^{j-2} \cdots q_j},  
 \end{align*}
and
\begin{align*}
 \frac{\rho_j^j}{q_2^{j-1}q_3^{j-2} \cdots q_j} > 
 \frac{\rho_j^{j+1}}{q_2^{j}q_3^{j-1} \cdots q_j^2 q_{j+1}} 
> \frac{\rho_j^{j+2}}{q_2^{j+1}q_3^{j} 
\cdots q_j^3 q_{j+1}^2 q_{j+2}} > \cdots.
\end{align*}
Therefore, we get for even $j\geq 2$
\begin{align*}
\varphi(\rho_j) \geq \sum_{k=j-3}^{j+3}\frac{(-1)^k
\rho_j^k}{q_2^{k-1} q_3^{k-2} \cdots q_k} =: \mu_j(\rho_j),
\end{align*}
and it is sufficient to prove that for every even $j\geq 2$ we have $\mu_j(\rho_j)\geq 0.$  
After factoring out $\rho_j^{j-3}/q_2^{j-4} q_3^{j-5} \cdots q_{j-3}$ the 
desired inequality is expressed in the following form
\begin{align*}
&-1 + \frac{\rho_j}{q_2q_3 \cdots q_{j-3}q_{j-2}} - \frac{\rho_j^2}{q_2^2q_3^2 
\cdots q_{j-2}^2q_{j-1}} + \frac{\rho_j^3}{q_2^3q_3^3 \cdots q_{j-2}^3q_{j-1}^2q_j} \\
&\quad -\frac{\rho_j^4}{q_2^4q_3^4 \cdots q_{j-2}^4q_{j-1}^3q_j^2q_{j+1}} + 
\frac{\rho_j^5}{q_2^5q_3^5 \cdots q_{j-2}^5q_{j-1}^4q_j^3q_{j+1}^2q_{j+2}} \\
&\quad -\frac{\rho_j^6}{q_2^6q_3^6 \cdots q_{k-2}^6q_{j-1}^5q_j^4q_{j+1}^3q_{j+2}^2q_{j+3}} 
\geq 0,
\end{align*}
or, using that $\rho_j= q_2q_3 \cdots q_j \sqrt{q_{j+1}},$
$$\nu_j(\rho_j) := -1 + q_{j-1}q_j\sqrt{q_{j+1}} - 2q_{j-1}q_j^2q_{j+1} + 
q_{j-1}q_j^2q_{j+1}\sqrt{q_{j+1}}$$
$$ + \frac{q_{j-1}q_j^2\sqrt{q_{j+1}}}{q_{j+2}} - 
\frac{q_{j-1}q_j^2}{q_{j+2}^{2}q_{j+3}} \geq 0.
$$
By our assumptions on the sequence $(q_k)_{k=2}^\infty, $ and, since $j$ is an even number,
we have $q_j = q_{j+2} =a$ and $q_{j-1}= q_{j+1} =q_{j+3}= b,$
so the last inequality has the form
$$\nu_j(\rho_j) := -1 + ab\sqrt{b} - 2a^2b^2 + 
a^2b^2\sqrt{b} + ab\sqrt{b} - 
1 =$$
$$-2 + 2 ab\sqrt{b} - 2a^2b^2 + a^2b^2\sqrt{b} \geq 0.
$$

So we need the inequality
\[
\nu_j(\rho_j) = a^2 b^2 (\sqrt{b} - 2) + 2(ab \sqrt{b} - 1) \geq 0.
\]
Firstly, we consider the case when $b \geq 4.$ 
We have 
\[
a^2 b^2 (\sqrt{b} - 2) \geq 0,
\]
and
\[
(a b \sqrt{b} - 1) > 0.
\] 

Consequently, in the case
$ b \geq 4,$ the desired inequality $\nu_k(\rho_k) \geq 0$ is
proved.

Further, we consider the  case $3 \leq a < b <4.$

Under our assumptions we have $2 < \frac{2}{9} ab.$
Therefore,

\begin{align*}
\nu_j(\rho_j) & \geq   a^2 b^2 \sqrt{b} - 2a^2 b^2 + 2ab \sqrt{b} - \frac{2}{9} ab.
\end{align*}
So, we want to check that 
\[
\psi(a, b) := ab\sqrt{b} - 2a b + 2 \sqrt{b} - \frac{2}{9} \geq 0.
\]
Since $3 \leq a < b <4,$ we get
\[
\psi(a, b) = ab(\sqrt{b} - 2) + 2 \sqrt{b} - \frac{2}{9} \geq b^2(\sqrt{b} - 2) + 
2\sqrt{b} - \frac{2}{9}.
\]
Set $\sqrt{b} = t, t \geq 0,$ then we obtain the following inequality:
\[
t^5 - 2t^4 + 2t - \frac{2}{9} \geq 0.
\]
This inequality holds for $t \geq 0{.}11126$  (we used numerical methods 
 to find that the greatest real root of the polynomial on the left-hand 
side is less than $ 0{.}11126$). Thus, it follows that it holds
for $b \geq 0{.}01238$. 

Lemma~\ref{th:lm3} is proved. 
\end{proof}

To prove Theorem \ref{th:mthm1} we need one more lemma.

\begin{Lemma}
\label{th:lm3a} Let $\varphi(x) = \sum_{k=0}^\infty (-1)^k a_k x^k,$
$a_k >0,  k=0, 1, 2, \ldots ,$ be an entire function 
such that $$q_2 = q_4 = q_6 = \ldots = a \geq 3,$$
$$q_3 = q_5 = q_7 = \ldots = b > a.$$ Suppose that 
there exists $z_0 \in [0, \frac{a_1}{a_2}] = [0, a]$ such that $\varphi(z_0) \leq 0.$
For an arbitrary integer $j \geq 2$ we define 
$$r_j := q_2 q_3 \cdots  q_j z_0. $$ 
Then for every odd  $j =2m+1, m\in \mathbb{N},$  
the following inequality holds: 
$$ \varphi(r_j) \leq 0.$$
\end{Lemma}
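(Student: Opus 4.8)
The plan is to prove $\varphi(r_j) \le 0$ for odd $j$ by the same philosophy used in Lemma~\ref{th:lm3}, but in the opposite direction. For even $j$ the author showed $\varphi(\rho_j) \ge 0$ by dominating $\varphi$ below by a short central sum of consecutive terms and checking that this window is nonnegative. For odd $j$ the natural dominant term has the opposite sign, so the central window should be $\le 0$, and I want to leverage the hypothesis $\varphi(z_0)\le 0$ with $z_0\in[0,a]$ rather than prove positivity outright.

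First I would establish, exactly as in Lemma~\ref{th:lm3}, that at the point $r_j = q_2 q_3\cdots q_j z_0$ the Taylor monomials $\tfrac{(-1)^k r_j^k}{q_2^{k-1}\cdots q_k}$ are unimodal in $k$ around $k=j$: because $z_0\le a = q_2=q_4=\cdots$ and $q_{k+1}\ge a$, the ratio of consecutive moduli crosses $1$ near $k=j$, so the terms increase in modulus up to $k\approx j$ and decrease afterward. For \emph{odd} $j$ the term $k=j$ carries sign $(-1)^j=-1$. An alternating series whose terms are unimodal about a negative-signed peak is bounded \emph{above} by any central partial sum ending appropriately, so I would write
\begin{align*}
\varphi(r_j) \le \sum_{k=j-3}^{j+3}\frac{(-1)^k r_j^k}{q_2^{k-1}q_3^{k-2}\cdots q_k} =: \widetilde\mu_j(r_j),
\end{align*}
and reduce matters to $\widetilde\mu_j(r_j)\le 0$. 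Here the key point is that the outer tails $\Sigma_{1}$ and $\Sigma_{2}$ are controlled by the monotonicity chain, so that the error from truncating has the \emph{favorable} sign for an upper bound; I would verify the direction of each remaining alternating tail carefully, since an upper bound demands that the discarded pieces be nonpositive (or be reabsorbed into a bracket of correct sign).

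Next I would factor out the common positive factor $r_j^{j-3}/(q_2^{j-4}\cdots q_{j-3})$ and substitute $r_j = q_2\cdots q_j z_0$, collapsing the seven-term window into a polynomial expression in $z_0$ together with the paired quotient values. Since $j$ is \emph{odd}, the periodicity now gives $q_{j-1}=q_{j+1}=q_{j+3}=a$ and $q_j=q_{j+2}=b$ — the roles of $a$ and $b$ are swapped relative to Lemma~\ref{th:lm3}. This is the place where the hypothesis $\varphi(z_0)\le 0$ must enter: I expect the resulting window $\widetilde\mu_j(r_j)$ to factor through, or be comparable to, the $j=1$ case, so that the sign of $\varphi(z_0)$ propagates upward through the scaling $x\mapsto q_2\cdots q_j\, x$. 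Concretely, I anticipate showing $\widetilde\mu_j(r_j)\le C_j\,\widetilde\mu_1(z_0)$ (or an analogous monotone comparison) with $C_j>0$, and $\widetilde\mu_1(z_0)\le \varphi(z_0)\le 0$ by the same truncation argument at the base level.

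\textbf{The main obstacle} will be the base case and the sign bookkeeping in the reduction $\varphi(r_j)\le\widetilde\mu_j(r_j)$. Unlike Lemma~\ref{th:lm3}, where nonnegativity held unconditionally for $a,b\ge 3$, here the inequality is \emph{conditional} on $\varphi(z_0)\le 0$, so a purely numerical root estimate of a single polynomial (as in the $t^5-2t^4+\cdots$ computation) cannot suffice — the argument must genuinely transmit the hypothesis. I therefore expect the delicate step to be proving that the scaled window at level $j$ inherits the correct sign from level $1$, which forces me to track both the unimodality of the coefficients at $r_j$ \emph{and} the precise alternating structure so that every truncated remainder contributes with sign $\le 0$. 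Once that directional control is in hand, the remaining verification is the same style of elementary estimate (reducing to an inequality in $a$, $b$, and $z_0$ with $a\ge 3$, $a<b$, $z_0\in[0,a]$) already carried out in the preceding two lemmas.
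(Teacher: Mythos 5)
Your overall direction --- use the unimodality of the term moduli at $x=r_j$ to discard the head of the alternating series and bound $\varphi(r_j)$ from above by what remains, then let the hypothesis $\varphi(z_0)\le 0$ do the work --- is the paper's strategy, and your sign bookkeeping for the head is right: for odd $j$ the sum $\sum_{k=0}^{j-2}$ has an even number of terms of increasing modulus and splits into negative pairs, so dropping it gives an upper bound. But the second half of your plan has a genuine gap. You propose to truncate the upper tail as well, keeping only the seven-term window $\widetilde\mu_j(r_j)=\sum_{k=j-3}^{j+3}$, and then to establish a comparison $\widetilde\mu_j(r_j)\le C_j\,\widetilde\mu_1(z_0)$ with $C_j>0$. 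You give no argument for this comparison; the object $\widetilde\mu_1$ is not even well defined (the window $k=j-3,\dots,j+3$ has negative indices at $j=1$); and there is no reason a finite window at level $j$ should be controlled by a finite window at level $1$ --- truncating the tail destroys precisely the structure that transmits the hypothesis. You flag this as the ``main obstacle'' but do not resolve it, and unlike Lemma~\ref{th:lm3} there is no unconditional numerical inequality to fall back on.

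The missing idea is that no second truncation is needed, because the full tail is an \emph{exact} rescaled copy of $\varphi$. Keep the whole series from $k=j-1$ on and factor out the term $r_j^{j-1}/(q_2^{j-2}q_3^{j-3}\cdots q_{j-1})$, which is positive since $j-1$ is even. Writing $r_j=q_2\cdots q_j z_0$, the $i$-th entry of the factored series collapses to $(-1)^i z_0^{i}/(q_{j+1}^{i-1}q_{j+2}^{i-2}\cdots q_{j+i-1})$, and because $j+1$ is even the shifted quotient pattern $q_{j+1},q_{j+2},\dots$ coincides with $q_2,q_3,\dots$ (both equal $a,b,a,b,\dots$). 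Hence
\begin{equation*}
\varphi(r_j)\ \le\ \frac{r_j^{j-1}}{q_2^{j-2}q_3^{j-3}\cdots q_{j-1}}\Bigl(1-z_0+\frac{z_0^2}{a}-\frac{z_0^3}{a^2b}+\cdots\Bigr)\ =\ \frac{r_j^{j-1}}{q_2^{j-2}q_3^{j-3}\cdots q_{j-1}}\,\varphi(z_0)\ \le\ 0,
\end{equation*}
an identity rather than an estimate. (One preliminary step you should also make explicit: $\varphi>0$ on $[0,1]$, so $z_0\in(1,a]$ and therefore $r_j\in(q_2\cdots q_j,\ q_2\cdots q_jq_{j+1}]$, which is what places the modulus peak at $k=j$ and validates both the head-dropping and the convergence bookkeeping.)
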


\begin{proof}
Let us fix an arbitrary $j=2m+1, m\in \mathbb{N}.$ 
For $x \in [0,1]$ we have 
$$ 1 \geq x  > \frac{x^2}{q_2} >  \frac{x^3}{q_2^2 q_3}  > 
\frac{x^4}{q_2^3 q_3^2 q_4 } > \cdots  , $$
whence 
$$
 \varphi(x) >0 \quad \mbox{for all}\quad x\in [0,1].
$$

Thus, we have   $z_0 \in (1, a],$ whence $q_2 q_3 \cdots  q_j < r_j 
\leq q_2 q_3 \cdots  q_j q_{j+1.}$ 
For all  $ x  \in (q_2q_3 \cdots q_j, q_2q_3 \cdots q_j q_{j+1}]$ we have 
\begin{align*}
 1< x < \frac{x^2}{q_2} < \cdots < \frac{x^j}{q_2^{j-1}q_3^{j-2} \cdots q_j},  
 \end{align*}
and
\begin{align*}
 \frac{x^j}{q_2^{j-1}q_3^{j-2} \cdots q_j} \geq 
 \frac{x^{j+1}}{q_2^{j}q_3^{j-1} \cdots q_j^2 q_{j+1}} 
> \frac{x^{j+2}}{q_2^{j+1}q_3^{j} 
\cdots q_j^3 q_{j+1}^2 q_{j+2}} > \cdots.
\end{align*}
Thus, for every $ x  \in (q_2q_3 \cdots q_j, q_2q_3 \cdots q_j q_{j+1}]$ we have 
$$ \varphi(x) = 1 - x + \frac{x^2}{q_2} -  \ldots - \frac{x^{j-2}}{q_2^{j-3}q_3^{j-4} \cdots q_{j-2}}
+ \frac{x^{j-1}}{q_2^{j-2}q_3^{j-3} \cdots q_{j-2}^2 q_{j-1}} - $$
$$\frac{x^{j}}{q_2^{j-1}q_3^{j-2} \cdots q_{j-2}^3 q_{j-1}^2 q_j} + 
\frac{x^{j+1}}{q_2^{j}q_3^{j-1} \cdots q_{j-2}^4 q_{j-1}^3 q_j^2 q_{j+1}} 
- \ldots  \leq$$
$$  \frac{x^{j-1}}{q_2^{j-2}q_3^{j-3} \cdots q_{j-2}^2 q_{j-1}} - 
\frac{x^{j}}{q_2^{j-1}q_3^{j-2} \cdots q_{j-2}^3 q_{j-1}^2 q_j} + $$
$$\frac{x^{j+1}}{q_2^{j}q_3^{j-1} \cdots q_{j-2}^4 q_{j-1}^3 q_j^2 q_{j+1}} 
- \ldots  = \frac{x^{j-1}}{q_2^{j-2}q_3^{j-3} \cdots q_{j-2}^2 q_{j-1}} \cdot (1 -$$
$$ \frac{x}{q_2 q_3 \cdot q_{j-1}q_j} + \frac{x^2}{q_2^2 q_3^2 \cdot q_{j-1}^2q_j^2 q_{j+1}} 
-  \frac{x^3}{q_2^3 q_3^3 \cdot q_{j-1}^3q_j^3 q_{j+1}^2 q_{j+2}}  + \ldots).   $$
Pasting $x = r_j = q_2 q_3 \cdots  q_j z_0$ we obtain the inequality
$$   \varphi(r_j)  \leq \frac{r_j^{j-1}}{q_2^{j-2}q_3^{j-3} \cdots q_{j-2}^2 q_{j-1}} \cdot (1 -
z_0 +\frac{z_0^2}{q_{j+1}}- \frac{z_0^3}{q_{j+1}^2 q_{j+2}} +\ldots ) = $$
$$  \frac{r_j^{j-1}}{q_2^{j-2}q_3^{j-3} \cdots q_{j-2}^2 q_{j-1}} \cdot (1 -
z_0 +\frac{z_0^2}{a}- \frac{z_0^3}{a^2 b} +\ldots ) =$$
$$\frac{r_j^{j-1}}{q_2^{j-2}q_3^{j-3} \cdots q_{j-2}^2 q_{j-1}} \cdot \varphi(z_0) \leq 0.  $$

Lemma~\ref{th:lm3a} is proved. 
\end{proof}

Now, we are in a position to prove Theorem \ref{th:mthm1}. Suppose that
$\varphi(x) = \sum_{k=0}^\infty (-1)^k a_k x^k,$
$a_k >0,  k=0, 1, 2, \ldots ,$  $$q_2 = q_4 = q_6 = \ldots = a \geq 3,$$
$$q_3 = q_5 = q_7 = \ldots = b > a,$$ and
there exists $z_0 \in (1, \frac{a_1}{a_2}] = (1, a]$ such 
that $\varphi(z_0) \leq 0.$ 

Let us fix an arbitrary even $j = 2s, s\in \mathbb{N}.$  By Lemma~\ref{th:lm1}, 
the function $\varphi$ has exactly $j$ zeros in the circle $\{x\  :\   |x| <  \rho_j \},$
$\rho_j := q_2 q_3 \cdots  q_j \sqrt{q_{j+1}}. $ Our aim is to show that all
this zeros are real. We observe that $z_0 \in  [1, q_2],$ $\rho_2 =q_2 \sqrt{q_{3}} > z_0,$
$r_3 = q_2 q_3  z_0 > \rho_2, $ $\rho_4 = q_2 q_3  q_4 \sqrt{q_{5}} > r_3,$
$r_5 = q_2 q_3 q_4  q_5 z_0 > \rho_4, $  $\ldots ,$ $r_{j-1} = 
q_2 q_3 \cdots  q_{j-1} z_0 > \rho_{j-2},$  $\rho_j := q_2 q_3 \cdots  
q_j \sqrt{q_{j+1}} > r_{j-1} $
(see Lemma~\ref{th:lm3} and Lemma~\ref{th:lm3a} for the definitions 
of $\rho_j$ and $r_j$). Finally, we get
$$ 0< z_0 < \rho_2 < r_3 < \rho_4 < r_5 < \ldots <   r_{j-1} < \rho_j,   $$
and, by Lemma~\ref{th:lm3} and Lemma~\ref{th:lm3a} we have
$$\varphi(0) >0,\    \varphi (z_0) \leq 0,\    \varphi(\rho_2) \geq 0,\    
\varphi(r_3) \leq 0,  \     \varphi(\rho_4) \geq 0, $$
$$\varphi(r_5) \leq 0,  \        \ldots ,\      \varphi (\rho_{j-2}) \geq 0, \     
\varphi(r_{j-1}) \leq 0.$$
Thus, we find at least $j-1$ real zeros of $\varphi$ in the circle $\{x\  :\   |x| <  \rho_j \},$
and, since $\varphi$ has exactly $j$ zeros in this circle, all the zeros in the circle
are real.  By the fact that $\rho_j \to \infty$ for $j\to\infty,$
we conclude that all the zeros of $\varphi$ are real.

Theorem \ref{th:mthm1} is proved.

\section{Proof of Theorem \ref{th:mthm2}}
\begin{proof}

During the proof we will consider  the function  $\varphi_{a,b} (z) =f_{a,b}(-z)$ 
instead of $f_{a,b}.$

If $\varphi_{a,b}$ belongs to the Laguerre--P\'olya class, then there exists 
$z_0 \in  [0, \frac{a_1}{a_2}] = [0, a],$ such that $\varphi_{a,b}(z_0) \leq 0.$

As we mentioned in the proof of Lemma~\ref{th:lm3a}, for $x \in [0,1]$ we have 
$$ 1 \geq x  > \frac{x^2}{q_2} >  \frac{x^3}{q_2^2 q_3}  > 
\frac{x^4}{q_2^3 q_3^2 q_4 } > \cdots  , $$
whence 
\begin{equation}
\label{mthm1.1}
 \varphi_{a,b}(x) >0 \quad \mbox{for all}\quad x\in [0,1].
\end{equation}

Thus, we have   $z_0 \in (1, a].$
For any $x \in (1, a]$ we have

\[
\varphi_{a,b}(x) = 1 - x + \frac{x^2}{a} + \left(-\frac{x^3}{a^2b} + 
\frac{x^4}{a^3b^2a}\right) + \left(-\frac{x^5}{a^4b^3a^2b} + 
\frac{x^6}{a^5b^4a^3b^2a}\right) + \cdots
\]
We need the inequality:
\[
\frac{\partial}{\partial b} \left(-\frac{x^3}{a^2b} + 
\frac{x^4}{a^3b^2a}\right) = \frac{x^3}{a^2b^2} - \frac{2x^4}{a^4b^3} \geq 0,
\]
 which is equivalent to  $ x \leq \frac{a^2b}{2},$
that is correct for $x \in (1, a].$ Therefore, 
$
\left(-\frac{x^3}{a^2b} + \frac{x^4}{a^3b^2a}\right) 
$
is increasing in $b,$ which follows that 
\[
-\frac{x^3}{a^2b} + \frac{x^4}{a^3b^2a} \geq -\frac{x^3}{a^3} + \frac{x^4}{a^6},
\]
 
since, under our assumptions, $a < b.$

Analogously, for all $k\geq 2$

\begin{align*}
& \frac{\partial}{\partial b} \left(-\frac{x^{2k - 1}}{a^{2k - 2}b^{2k - 3} \cdots b} + 
\frac{x^{2k}}{a^{2k - 1}b^{2k - 2} \cdots a}\right) \\ 
&= \frac{\partial}{\partial b} \left(- \frac{x^{2k - 1}}{a^{(k-1)k} b^{(k-1)^2}} + 
\frac{x^{2k}}{a^{k^2} b^{(k-1)k}}\right) = \frac{(k-1)^2 \cdot 
x^{2k-1}}{a^{(k-1)k} \cdot b^{(k-1)^2 + 1}}\\
& \quad - \frac{(k-1)k \cdot x^{2k}}{a^{k^2} \cdot b^{(k-1)k +1}} \geq 0,
\end{align*}
or, equivalently, 
\[
x \leq \frac{k-1}{k} a^k b^{k-1},
\]
which is correct for $x \in (1, a]$ and $k\geq 2.$ Therefore, 
$$\left(- \frac{x^{2k - 1}}{a^{(k-1)k} b^{(k-1)^2}} + 
\frac{x^{2k}}{a^{k^2} b^{(k-1)k}}\right) \geq
\left(- \frac{x^{2k - 1}}{a^{\frac{(2k-1)(2k-2)}{2}} } + 
\frac{x^{2k}}{a^\frac{2k(2k-1)}{2} }\right) .$$

Thus, for any $x \in (1, a],$
\begin{align*}
\varphi_{a,b} (x) &= 1 - x + \frac{x^2}{a} - \frac{x^3}{a^2b} + 
\frac{x^4}{a^3b^2a} - \cdots\\
& \geq 1 - x + \frac{x^2}{a} - \frac{x^3}{a^3} + \frac{x^4}{a^6} - \cdots = 
\sum_{k=0}^\infty \frac{(-1)^k x^k}{a^\frac{k(k-1)}{2}}
= g_{\sqrt{a}}(-\sqrt{a}x),
\end{align*}
where $g_a$ is the partial theta-function.

Since $\varphi_{a,b}(z_0) \leq 0$ for $z_0 \in (1, a],$ we have $g_{\sqrt{a}}(-\sqrt{a}z_0) \leq 0.$ 
Denote by $y_0 := - \sqrt{a}z_0,$ then $y_0 \in (\sqrt{a}, (\sqrt{a})^3]. $ So, for the partial
theta-function $g_{\sqrt{a}}$ there exists a point  $y_0 \in (\sqrt{a}, (\sqrt{a})^3]$
such that $g_{\sqrt{a}}(y_0) \leq 0.$ By Theorem E, $g_{\sqrt{a}}\in \mathcal{L-P},$
whence $(\sqrt{a})^2 \geq q_{\infty}.$

Thus, we have proved statement 1 of Theorem \ref{th:mthm2}.

Statement 2 follows from the reasoning above. Suppose that the numbers 
$a, b, q_{\infty} \leq a <b,$ are such that $\varphi_{a,b} \in \mathcal{L-P}.$ 
By Theorem~\ref{th:mthm1} there exists a point $z_0 \in (1, a]$ such that
$\varphi_{a,b} (z_0) \leq 0.$ As we have proved below, for every fixed
$a$ and  $x \in (1, a]$ the expression for $\varphi_{a,b}$ is non-decreasing in $b.$
Thus, if  $ a < c< b$ then
$$ \varphi_{a,c}(z_0) \leq \varphi_{a,b}(z_0) \leq 0. $$ 
By Theorem \ref{th:mthm1}, $\varphi_{a,c} \in \mathcal{L-P},$ and 
we have proved the statement 2 of Theorem \ref{th:mthm2}.

Statement 3 can be proved analogously. For every $x \in (1, a]$ we have
$$
\varphi_{a,b}(x) = 1 - x + \left(\frac{x^2}{a}  -\frac{x^3}{a^2b}\right) + 
\left(\frac{x^4}{a^3b^2a}  - \frac{x^5}{a^4b^3a^2b} \right) $$
$$+ \left(\frac{x^6}{a^5b^4a^3b^2a} - \frac{x^7}{a^6b^5a^4b^3a^2b} \right) + \cdots
$$
We need the inequality
$$\frac{\partial}{\partial a} \left(\frac{x^2}{a} - 
\frac{x^3}{a^2b}\right) = - \frac{x^2}{a^2} + \frac{2x^3}{a^3b} \leq 0,$$
 which is equivalent to $ x \leq \frac{ab}{2},$
that is correct for $x \in (1, a].$ Therefore, 
$
\left(\frac{x^2}{a} - 
\frac{x^3}{a^2b}\right)
$
is decreasing in $a.$

Analogously, for all $k\geq 2$
\begin{align*}
& \frac{\partial}{\partial a} \left(\frac{x^{2k}}{a^{2k - 1}b^{2k - 2} \cdots b^2a} - 
\frac{x^{2k+1}}{a^{2k}b^{2k - 1} \cdots b^3 a^2 b}\right) \\ 
&= \frac{\partial}{\partial a} \left( \frac{x^{2k }}{a^{k^2} b^{k(k-1)}} - 
\frac{x^{2k+1}}{a^{k(k+1)} b^{k^2}}\right) = - \frac{k^2 \cdot 
x^{2k}}{a^{k^2+1} \cdot b^{k(k-1)}}\\
& \quad + \frac{k(k+1) \cdot x^{2k+1}}{a^{k^2 +k+1} \cdot b^{k^2}} \leq 0,
\end{align*}
or, equivalently, 
$$
x \leq \frac{k}{k+1} a^k b^{k},
$$
which is correct for $x \in (1, a]$ and $k\geq 2.$ Therefore, for every fixed
$b, b >a,$ and $x \in (1, a]$ the expression for $\varphi_{a,b}$ is non-increasing in $b.$
Suppose that the numbers 
$a, b, q_{\infty} \leq a <b,$ are such that $\varphi_{a,b} \in \mathcal{L-P}.$ 
By Theorem~\ref{th:mthm1} there exists a point $z_0 \in (1, a]$ such that
$\varphi_{a,b} (z_0) \leq 0.$ Thus, if  $ a < d < b$ then
$$ \varphi_{d,b}(z_0) \leq \varphi_{a,b}(z_0) \leq 0 $$ 
and $z_0\in (1, a] \subset (1, d].$
By Theorem \ref{th:mthm1}, $\varphi_{d,b} \in \mathcal{L-P},$ and 
we have proved the statement 3 of Theorem \ref{th:mthm2}.

Theorem \ref{th:mthm2} is proved.

\end{proof}

{\bf Aknowledgement.} \thanks{ The research was supported  by  the National 
Research Foundation of Ukraine funded by Ukrainian State budget in frames of 
project 2020.02/0096 ``Operators in infinite-dimensional spaces:  the interplay between 
geometry, algebra and topology''.}


\begin{thebibliography}{8}

\bibitem{Boh} 
A.~Bohdanov, Determining bounds on the balues of barameters for a bunction 
$\varphi_{a}(z, m) =\sum_{k=0}^\infty \frac{z^k}{a^{k^2}}(k!)^m,$ 
$m \in (0,1), $ to Belong to the Laguerre--P\'{o}lya Class, \textsl{ Comput. Methods Funct. Theory},
(2017), DOI:10.1007/s40315-017-0210-6. 

\bibitem{BohVish} 
A.~Bohdanov and A.~Vishnyakova, On the conditions for entire 
functions related to the partial theta-function to belong to the Laguerre--P\'{o}lya class, 
\textsl{J. Math. Anal. Appl.}, \textbf{434 }, No. 2, (2016),  1740--1752, 
DOI:10.1016/j.jmaa.2015.09.084

\bibitem{cc1}
T.~Craven and G.~Csordas,  Complex zero decreasing sequences,
\textsl{Methods Appl. Anal.},  \textbf{2} (1995), 420--441.

\bibitem{HW}
I.~I.~Hirschman and D.~V.~Widder, {\em  The Convolution Transform},
Princeton University Press, Princeton, New Jersey, 1955.

\bibitem{hut}
J.~I.~Hutchinson,  On a remarkable class of entire functions,
\textsl{Trans. Amer. Math. Soc.},  \textbf{25} (1923), 325--332.

\bibitem{klv}
O.~Katkova, T.~Lobova and A.~Vishnyakova, On power series
having sections with only real zeros,  \textsl{ Comput. Methods Funct. Theory},  
\textbf{3}, No 2, (2003), 425--441.

\bibitem{klv1}
O.~Katkova, T.~Lobova and A.~Vishnyakova, On entire functions having 
Taylor sections with only real zeros,  \textsl{ J. Math. Phys., Anal., Geom.}, 
\textbf{11}, No. 4, (2004), 449--469. 

\bibitem{kos0} V.~P.~Kostov, About a partial theta function,
\textsl{ C. R. Acad. Bulgare Sci.}, \textbf{ 66}, (2013), 629--634.

\bibitem{kos1}
V.~P.~Kostov,  On the zeros of a partial theta 
function,  \textsl{ Bull. Sci. Math.},  \textbf{137}, No. 8 
(2013),  1018--1030.

\bibitem{kos2}
V.~P.~Kostov, On the spectrum of a partial theta function, 
\textsl{Proc. Roy. Soc. Edinburgh Sect. A},  \textbf{144}, No. 05 (2014), 925--933.

\bibitem{kos3} V.~P.~Kostov, Asymptotics of the spectrum of partial 
theta function, \textsl{ Revista Matematica Complutense}, \textbf{27}, No. 2, (2014),
677--684.

\bibitem{kos03} V.~P.~Kostov,  
A property of a partial theta function, 
\textsl{ C. R. Acad. Bulgare Sci.}, \textbf{67}, (2014),  1319--1326.

\bibitem{kos04} V.~P.~Kostov,  
Asymptotic expansions of zeros of a partial theta function,
\textsl{ C. R. Acad. Bulgare Sci.}, \textbf{68}, (2015),  419--426.

\bibitem{kos4} V.~P.~Kostov, On the double zeros of a partial theta function,
\textsl{ Bulletin des Sciences Mathamatiques}, \textbf{140}, No. 4, (2016),  98--111.

\bibitem{kos5} V.~P.~Kostov,
On a partial theta function and its spectrum, \textsl{ Proceedings of the Royal Society of Edinburgh 
Section A: Mathematics}, \textbf{146}, No. 3, (2016),  609--623.

\bibitem{kos5.0} V.~P.~Kostov,
The closest to 0 spectral number of the partial theta function,  
\textsl{ C. R. Acad. Bulgare Sci.},  \textbf{69}, (2016),  1105--1112.

\bibitem{kosshap}
V.~P.~Kostov, B.~Shapiro, 
Hardy-Petrovitch-Hutchinson's problem and 
partial theta function,  \textsl{Duke Math. J.}, \textbf{162}, No. 5 (2013), 
825--861. 

\bibitem{lev}
B.~Ja.~Levin, \textsl{Distribution of Zeros of Entire Functions},
Transl. Math. Mono., 5, Amer. Math. Soc., Providence, RI, 1964;
revised ed. 1980.

\bibitem{ngthv1}
T.~H.~Nguyen and A.~Vishnyakova, On the entire functions from the Laguerre--P\'olya 
class having the decreasing second quotients of Taylor coefficients, \textsl{Journal of 
Mathematical Analysis and Applications}, \textbf{465}, No. 1 (2018), 348 -- 359, 
\url{https://doi.org/10.1016/j.jmaa.2018.05.018}.

\bibitem{ngthv2} T.~H.~Nguyen and A.~Vishnyakova, On the necessary condition for 
entire function with the increasing second quotients of 
Taylor coefficients to belong to the Laguerre--P\'olya class, 
\textsl{Journal of Mathematical Analysis and Applications}, \textbf{480}, No. 2 (2019),
\url{https://doi.org/10.1016/j.jmaa.2019.123433}.

\bibitem{ngthv3} T.~H.~Nguyen and A.~Vishnyakova, On the closest to 
zero roots and the second quotients of 
Taylor coefficients of entire functions from the Laguerre--P\'olya I class, 
\textsl{Results in Mathematics}, \textbf{75}, No. 115 (2020), 
\url{https://doi.org/10.1007/s00025-020-01245-w}.

\bibitem{ngthv4} T.~H.~Nguyen and A.~Vishnyakova, On the entire functions 
from the Laguerre--P\'olya I class 
having the increasing  second quotients of Taylor coefficients, 
\textsl{Journal of Mathematical Analysis and Applications}, \textbf{498}, No. 1 (2021), 
\url{https://doi.org/10.1016/j.jmaa.2021.124955}.

\bibitem{ngthv5} T.H.~Nguyen and A.~Vishnyakova, On the number of real zeros of real 
entire functions with a non-decreasing sequence of the second quotients of Taylor coefficients,
\url{https://arxiv.org/abs/2101.11757}

\bibitem{ngth1} T.~H.~Nguyen, On the conditions for a special entire function related to 
the partial theta-function and the Euler function to belong to the Laguerre--P\'olya class, 
\textsl{Computational Methods and Function Theory} (2021), 
\url{https://doi.org/10.1007/s40315-021-00361-0}.

\bibitem{O}
N.~Obreschkov, Verteilung und Berechnung der Nullstellen reeller
Polynome, VEB Deutscher Verlag der Wissenschaften, Berlin, 1963.

\bibitem{pol}
G.~P\'olya, Collected Papers, Vol. II Location of Zeros, (R.P.Boas
ed.) MIT Press, Cambridge, MA, 1974.

\bibitem{polsch}
G.~P\'olya and J.~Schur, \"Uber zwei Arten von Faktorenfolgen in der Theorie
der algebraischen Gleichungen, \textsl{ J. Reine Angew. Math.}, \textbf{144} (1914),
pp. 89--113.

\bibitem{sokal}
A.~D.~Sokal, The leading root of the partial theta function, \textsl{Advances in Mathematics}, \textbf{229}, No. 5 (2012), 2063 -- 2621.

\bibitem{War} S.~O.~Warnaar, Partial theta functions, 
\url{ https://www.researchgate.net/publication/327791878_Partial_theta_functions}.

\end{thebibliography}
\end{document}